\theoremstyle{plain}
\newtheorem{theorem}{Theorem}[section]
\newtheorem{lemma}[theorem]{Lemma}
\newtheorem{proposition}[theorem]{Proposition}
\theoremstyle{definition}
\newtheorem{definition}[theorem]{Definition}
\newtheorem{remark}[theorem]{Remark}
\theoremstyle{remark}
\numberwithin{equation}{section}
\newcommand{\N}{\mathbb N}
\newcommand{\R}{\mathbb R}
\newcommand{\hno}{{\mathcal H}^{N-1}}
\newcommand{\dd}{\mathrm{d}}
\newcommand{\dhno}{\;\dd{\mathcal H}^{N-1}}
\newcommand{\dx}{\;\dd x}
\newcommand{\dy}{\;\dd y}
\newcommand{\meno}{\!\setminus\!}
\renewcommand{\o}{\Omega}
\renewcommand{\sp}{BV(\o;\R^M)}
\newcommand{\rb}{\partial^*}
\newcommand{\f}{\mathcal{F}}
\newcommand{\g}{\mathcal{G}}
\newcommand{\per}{\mathcal{P}}
\newcommand{\restr}{%
  \,\raisebox{-.127ex}{\reflectbox{\rotatebox[origin=br]{-90}{$\lnot$}}}\,%
}
\newcommand{\average}{{\mathchoice {\kern1ex\vcenter{\hrule height.4pt
width 6pt
depth0pt} \kern-9.7pt} {\kern1ex\vcenter{\hrule height.4pt width 4.3pt
depth0pt}
\kern-7pt} {} {} }}
\begin{document}

\title{Piecewise constant reconstruction of damaged color images}
\author{Riccardo Cristoferi, Irene Fonseca}
\address{Center for Nonlinear Analysis, Carnegie Mellon University, Pittsburgh, PA 15213-3890, USA}
\subjclass[2010]{49J99, 26B30, 68U10.}
\keywords{Energy minimization, RGB total variation models, colorization, inpainting, image
restoration.}

\begin{abstract}
A variational model for reconstruction of damaged color images is studied, in particular
in the case where only finitely many colors are admissible for the reconstructed image.
An existence result and regularity properties of minimizers are presented.
\end{abstract}

\maketitle

\section{Introduction}

The aim of this paper is to study a variational model for the reconstruction of color images when information on the color is available everywhere except in a damaged region, where only a grey level function is known.

The variational approach we consider here has been introduced by Fornasier in \cite{For}, as part of a project aimed at restoring the Mantegna's fresco in the Ovetari Chapel of Italian Eremitani's Church in Padua.
The model is inspired by the famous ROF model for denoising, introduced by Rudin, Osher and Fatemi in the context of grey level functions (see \cite{ROF}): to minimize
\begin{equation}\label{eq:rof}
v\in BV(\o)\mapsto |Dv|(\o)+\lambda\|v-v_0\|^2_{L^2(\o)}\,,
\end{equation}
where $\o\subset\R^2$ denotes the image domain, $v_0\in L^2(\o)$ is the given image, and $\lambda\geq0$ is a tunning parameter.
In order to be able to reconstruct edges in the image, the space of functions of bounded variations $BV$ is typically used for representing an image.

When dealing with color images, there are two preferred ways to represent them mathematically.
The first one is the RGB (red, green, blue) model, where an image is represented via its three channels $u_R, u_G$ and $u_B$, with $u\in BV(\o;\R^3)$ defined as $u=(u_R,u_G,u_B)$.
The other way to represent an image is called Chromaticity/Brightness, where a RGB image $u\in BV(\o\,;\,\R^3)$ is decomposed into two components: its chromaticity $u/|u|$ and its brightness $|u|$.
The main idea of this model is to reconstruct the two parts independently
(see, for instance, \cite{FerFonMAs} and \cite{KanMar}).

The total variation model introduced by Fornasier in \cite{For} is a variant of \eqref{eq:rof}, and it appeals to the RGB model to represent the image.
The grey level information in the damage region $D\subset\o$ is modeled as a nonlinear distortion of the colors, $\mathcal{L}:\R^3\to\R$. Often $\mathcal{L}$ is taken to be of the form
\[
\mathcal{L}(v):=L(v\cdot e)\,,
\]
where $e\in\R^3$ is a unit vector and $L:\R\to[0,\infty)$ is an increasing function (usually neither concave nor convex).
Usually, both $e$ and $L$ are chosen based on the given image in order to best fit (\emph{i.e.}, with minimal total variance) the distribution of data from the real color (see \cite{ForMar}).
The functional to be minimized is
\[
\f(u):=|Du|(\o)+\lambda\int_{\o\,\meno \,D}|u-f|^p\dx + \mu\int_D \bigl|\, L(u\cdot e)-L(f\cdot e)  \,\bigr|^p\dx\,,
\]
where $p\geq1$, $f\in L^p(\o\,;\,\R^M)$ is the given image, and $\lambda,\mu\geq0$ are tuning parameters.
Since the restored image will be colored everywhere, the problem we consider can be seen as a generalization of inpainting (see \cite{AmbMas, BalBerCasSapVer, BugBerCas, ChaKanHaShe, ChaShe1, ChaShe2}).
Note that here, for the sake of mathematical abstraction, we consider the target space to be $\R^M$, for $M\in\N$, in place of $\R^3$ as in the description of the RGB model.

We note that in the literature there are other approaches to the reconstruction of an image when information of the colors is not everywhere available (see, for instance, \cite{Irony, LevLisWei, Sap, SapYat, Sykora}).\\

Numerical experiments, as well as a first study of the model, are present in the work of Fornasier and March (see
\cite{ForMar}).
Subsequently, a rigorous analytical study in the case of perfect reconstruction (\emph{i.e.}, when $\lambda=\mu=\infty$) has been carried out by Fonseca, Leoni, Maggi and Morini in \cite{FLMM}. In particular, the authors provide a characterization of the piecewise constant functions $f$ that can be obtained as minimizers of $\f$, whose jump set is the union of finitely many Lipschitz curves.
Furthermore, in \cite{FLMM} the authors study the minimizers in the case in which the damaged region is uniformly distributed in $\o$.\\

In this paper we pursue the study initiated in \cite{FLMM}, working in the general case where $u\in BV(\o\,;\,\R^M)$, with $\o\subset\R^N$ a bounded connected open set with Lipschitz boundary, and where we fix apriori the number of colors that we are allowed to use, say $k\in\N$, but the color spectrum is not restricted, \emph{i.e.}, we consider the minimization problem
\begin{equation}\label{eq:kcolors}
\min\left\{\, \f(u) \,:\, u\in BV(\o\,;\, A)\,,\, A\subset\R^M \text{ with } \mathcal{H}^0(A)=k \,\right\}\,.
\end{equation}
Notice that if $A=\{a_1,\dots,a_k\}\subset\R^M$ it is possible to write a function $u\in BV(\o\,;\, A)$ as
\begin{equation}\label{eq:writingu}
u=\sum_{i=1}^k a_i\chi_{\o_i}\,,
\end{equation}
where $\o_i:=\{ x\in\o\,:\,u(x)=a_i \}$, and the functional $\f$ becomes
\begin{align}\label{eq:functf}
\f(u)&=\sum_{i=2}^k \sum_{j<i} |a_i-a_j|\hno(\rb\o_i\cap\rb\o_j\cap\o) + \lambda\sum_{i=1}^k \int_{\o_i\,\meno\, D} |a_i-f|^p\dx \nonumber\\
&\hspace{1cm}+ \mu\sum_{i=1}^k \int_{\o_i\cap D} |L(a_i\cdot e)-L(f\cdot e)|^p\dx\,,
\end{align}
where $\rb\o_i$ denotes the reduced boundary of the set $\o_i$ (see Definition \ref{def:rb}), that coincides with the topological boundary  $\partial\o_i$ in the case it is Lipschitz, and $\hno$ is the $(N-1)$-dimensional Hausdorff measure (see Definition \ref{def:h}).
The minimization problem \eqref{eq:kcolors} can be thought both as a combination of an inpainting and a segmentation problem,
or as a partition problem with weighted perimeter and volume terms.

A popular way to segment an image is by using the Mumford-Shah functional, introduced in \cite{MS2}.
The functional is defined over couples $(v,\Gamma)$, where $\Gamma\subset\o$ is a closed set, and $v\in C^1(\o\setminus\Gamma)$, and reads as
\begin{equation}\label{eq:ms}
\mathcal{MS}(v,\Gamma):= \int_{\o\setminus \Gamma} |\nabla v|^2 \dx + \alpha\int_\o |v-f|^2\dx + \beta\hno(\Gamma)\,,
\end{equation}
where $\alpha,\beta>0$ are tuning parameters. The set $\Gamma$ represents the set of edges of the objects in the image, and this is assumed to be closed.
Existence for the minimization problem
\begin{equation}\label{eq:minms}
\min\{\, \mathcal{MS}(v,\Gamma) \,:\, \Gamma\subset\o \text{ is a closed set}\,,\, v\in C^1(\o\setminus \Gamma) \,\}\,,
\end{equation}
has been proved by De Giorgi, Carriero e Leaci in \cite{DGCL} via the following relaxed version of the functional \eqref{eq:ms},
\begin{equation}\label{eq:relms}
\widetilde{\mathcal{MS}}(w):= \int_{\o\setminus J_w} |\nabla w|^2 \dx + \alpha\int_\o |w-f|^2\dx + \beta\hno(J_w)\,,
\end{equation}
where $w\in SBV(\o)$ (the space of special functions with bounded variation, see \cite[Chapter 4]{AFP})
 and $J_w$ denotes the jump set of $w$ (see Definition \ref{def:jump}). Existence of a solution to the minimization problem
\begin{equation}\label{eq:minrelms}
\min\{\, \widetilde{\mathcal{MS}}(w) \,:\, w\in SBV(\o) \,\}
\end{equation}
can be obtained via the Direct Method of the Calculus of Variations. In order to get a solution to the minimization problem \eqref{eq:minms}, a solution $w$ to the minimization problem \eqref{eq:minrelms} is a solution of the original problem \eqref{eq:minms} provided the jump set $J_w$ of $w$ is \emph{essentially closed}, namely that $\hno\left((\overline{J_w}\setminus J_w)\cap\o\right)=0$, in which case we set $\Gamma:=\overline{J_w}$ and $v:=w$.
The proof of the fact that the jump set is essentially closed relies on delicate density estimates for the jump set of a solution $w$ to the minimization problem \eqref{eq:minrelms}. Once that property is establish, regularity theory allows to conclude that $w\in C^1(\o\meno\overline{J_w})$.

Subsequently, Congedo and Tamanini proved existence and regularity properties of minimizers to the minimization problem \eqref{eq:ms}
in the case where $\nabla v=0$ in $\o\meno\Gamma$ (see \cite{ConTam}, \cite{TamCon}):
\begin{equation}\label{eq:minpbCT}
\min\{\, \mathcal{MS}(v,\Gamma) \,:\, \Gamma\subset\o \text{ is a closed set}\,,\,\nabla  v=0 \text{ in } \o\setminus K \,\}\,.
\end{equation}
Inspired by \cite{DGCL}, the main idea in \cite{TamCon} is to rephrase the problem in the space of Caccioppoli partitions (see Definition \ref{def:Cacc}), namely to consider the functional
\begin{equation}\label{eq:ConTam}
G(\mathcal{W}, w):=\sum_{i=2}^\infty\sum_{j<i} \hno(\rb W_i\cap\rb W_j\cap\o) + \alpha\sum_{i=1}^\infty \int_{W_i} |w_i-f|^p\dx
\end{equation}
where $\mathcal{W}:=\{W_i\}_{i\in\N}$ is a partition of $\o$ (see Definition \ref{def:Cacc}), and $w\in SBV(\o)$ is given by
$w:=\sum_{i\in\N}w_i\chi_{W_i}$, for some $w_i\in\R$.
Notice that here countably many partitions are allowed.
If $(\mathcal{W}, w)$ is a solution to the minimization problem
\begin{equation}\label{eq:minpbCT}
\min\{ G(\mathcal{W}, w) : \mathcal{W} \text{ partition of } \o,\, w\in SBV(\o)\,, \nabla w=0 \text{ on } \o_i, \text{ for all } i\in\N \}\,,
\end{equation}
and if
\begin{equation}\label{eq:clousure}
\hno\left((\overline{J_w}\setminus J_w)\cap\o\right)=0
\end{equation}
then $w$ is also a solution to the minimization problem \eqref{eq:minpbCT}.
In order to obtain \eqref{eq:clousure}, the argument is anchored to an elimination lemma proved by Congedo and Tamanini (see \cite[Lemma 5.3]{TamCon}).

\begin{lemma}\label{lem:lemCT}
Let $p\geq1$ and let $(\mathcal{W}, w)$ be a solution to the minimization problem \eqref{eq:minpbCT}, and assume that $f\in L^{Np}_{loc}(\o)$.
Then for all $m\in\N$, there exists $\eta>0$ such that for all $x\in\o$ there exists $r_0>0$ with the following property:
if
\[
\Bigl|B_r(x)\setminus \bigcup_{i=1}^m W_i\Bigr|\leq\eta r^N
\]
for some $0<r<r_0$, then $|B_{r/2}(x)\setminus \bigcup_{i=1}^m W_i|=0$.
\end{lemma}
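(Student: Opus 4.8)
The plan is to run a De~Giorgi type elimination argument on the volume of the discarded phases inside the ball. Fix $m$ and set, for $\rho\in(0,r)$,
\[
g(\rho):=\Bigl|B_\rho(x)\setminus\bigcup_{i=1}^m W_i\Bigr|=\sum_{i>m}|W_i\cap B_\rho(x)|,
\]
writing $W_0:=\bigcup_{i>m}W_i$ for the discarded region and $E:=W_0\cap B_\rho(x)$. The function $g$ is nondecreasing and absolutely continuous, with $g'(\rho)=\hno(\partial B_\rho(x)\cap W_0)$ for a.e.\ $\rho$ by the coarea formula. The goal is to show that $g$ obeys a differential inequality of the form $g'(\rho)\geq c\,g(\rho)^{(N-1)/N}$ on $(r/2,r)$ whenever $g(r)$ is small, and then to integrate it to force $g(r/2)=0$.

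The core step is a competitor estimate. For a.e.\ $\rho$ I would build an admissible competitor by absorbing $E$ into a single phase: by pigeonhole there is $j_0\le m$ with
\[
\hno(\rb W_0\cap\rb W_{j_0}\cap B_\rho(x))\geq\tfrac1m\,\hno(\rb W_0\cap B_\rho(x)),
\]
and I set $W_{j_0}':=W_{j_0}\cup E$, $W_i':=W_i\setminus B_\rho(x)$ for $i>m$, leaving the remaining sets and all the values $w_i$ unchanged. Cutting along a generic sphere keeps every set of finite perimeter, so the competitor is admissible with $\nabla w'=0$ on each phase. Passing from $\mathcal W$ to the competitor destroys at least the interface $\rb W_0\cap\rb W_{j_0}\cap B_\rho(x)$ and creates at most the spherical interface $\partial B_\rho(x)\cap W_0$, of measure $g'(\rho)$ for a.e.\ $\rho$, while the fidelity term increases by at most $\int_E|w_{j_0}-f|^p\dx$. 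Minimality of $(\mathcal W,w)$ for \eqref{eq:minpbCT} then yields
\[
\tfrac1m\,\hno(\rb W_0\cap B_\rho(x))\leq g'(\rho)+\alpha\int_E|w_{j_0}-f|^p\dx.
\]

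To close the inequality I would invoke the relative isoperimetric inequality $g(\rho)^{(N-1)/N}\leq C_N\,\hno(\rb W_0\cap B_\rho(x))$, valid once $\eta$ is so small that $|E|\le\frac12|B_\rho(x)|$, and then estimate the fidelity error. Writing $M_m:=\max_{j\le m}|w_j|<\infty$ and applying H\"older's inequality with exponents $N$ and $N/(N-1)$, together with $|E|=g(\rho)\le\eta r^N$,
\[
\int_E|w_{j_0}-f|^p\dx\leq 2^{p-1}\Bigl(M_m^p\,\eta^{1/N}r+\|f\|_{L^{Np}(B_{r_0}(x))}^p\Bigr)g(\rho)^{(N-1)/N}.
\]
This is exactly where the hypothesis $f\in L^{Np}_{loc}(\o)$ enters, since it makes $\||f|^p\|_{L^N(E)}$ comparable to $g(\rho)^{(N-1)/N}$ with a prefactor that tends to $0$ as $r_0\downarrow0$ by absolute continuity of the integral. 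Choosing first $\eta=\eta(m,N)$ small, and then, for each fixed $x$, $r_0=r_0(x)$ small enough that the bracketed prefactor is absorbed, I obtain $g'(\rho)\geq c\,g(\rho)^{(N-1)/N}$ with $c=c(m,N)>0$ for a.e.\ $\rho\in(r/2,r)$. If $g(r/2)>0$ then $g>0$ on $(r/2,r)$, so dividing by $g^{(N-1)/N}$ and integrating gives $g(r)^{1/N}\geq (c/N)(r/2)$, hence $g(r)\geq c'r^N$; shrinking $\eta$ below $c'$ contradicts $g(r)\le\eta r^N$, forcing $g(r/2)=0$, which is the assertion.

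\emph{The main obstacle} is the fidelity bookkeeping. Since $f$ is merely in $L^{Np}_{loc}(\o)$ there is no uniform $L^\infty$ bound on the phase values, so the error term must be dominated purely by the isoperimetric quantity $g(\rho)^{(N-1)/N}$; matching the exponents is precisely what forces the H\"older pairing $(N,N/(N-1))$ and the integrability $f\in L^{Np}_{loc}(\o)$, and it dictates the order of quantifiers, namely $\eta$ depending only on $m$ and $N$ and then $r_0$ depending on $x$ through the local smallness of $\|f\|_{L^{Np}(B_{r_0}(x))}$. The perimeter accounting also requires care: merging into a single phase saves only a fraction $1/m$ of the relative perimeter of $W_0$, which is harmless here because $\eta$ is permitted to depend on $m$.
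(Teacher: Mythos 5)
Your argument is sound, but note first that the paper never actually proves Lemma \ref{lem:lemCT}: it is imported verbatim from Congedo--Tamanini \cite[Lemma 5.3]{TamCon}, and the closest proof written out in the paper is that of its weighted analogue, Theorem \ref{thm:elimination}. Your proof follows the same De Giorgi elimination skeleton as that theorem (volume function, competitor absorbing the discarded phases inside a generic sphere, perimeter/fidelity bookkeeping, isoperimetric inequality, differential inequality for $g^{1/N}$, integration from $r/2$ to $r$), but your key step is genuinely different, and the difference is instructive. You merge $E=W_0\cap B_\rho(x)$ into a \emph{single} retained phase $W_{j_0}$ chosen by pigeonhole; the interfaces $\rb W_0\cap\rb W_j$ with $j\le m$, $j\neq j_0$ survive merely relabelled, which costs nothing precisely because all interfaces in the Congedo--Tamanini functional carry weight $1$. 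In the paper's setting that relabelling changes the weight $\alpha_{ij}$ into $\alpha_{j_0j}$ and can \emph{increase} the energy, which is exactly why the paper must replace the pigeonhole by Leonardi's two-class minimal-cut lemma (Lemma \ref{lem:mincut}) and assume $(H3)$. Likewise, you drop the nonnegative fidelity of the discarded phases and bound $\int_E|w_{j_0}-f|^p$ outright via H\"older with exponents $\left(N,\tfrac{N}{N-1}\right)$, which is what forces $f\in L^{Np}_{\rm loc}$ and the $x$-dependent $r_0$; the paper instead exploits the cancellation in $|a_i-f|^p-|a_j-f|^p$ over a fixed bounded color set, which is why it only needs $q\ge N(p-1)$. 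Two small points to patch: the identity $\hno(\rb W_0\cap B_\rho(x))=\sum_{j\le m}\hno(\rb W_0\cap\rb W_j\cap B_\rho(x))$ behind your pigeonhole requires the structure theorem for Caccioppoli partitions (Theorem \ref{thm:struct}), and admissibility of the competitor also demands that the modified function have finite total variation, i.e. that $\int_{\partial B_\rho(x)\cap W_0}|w_{j_0}-w|\dhno<\infty$, which holds only for a.e.\ $\rho$ (by $w\in L^1$ and coarea) and should be added to your genericity conditions on $\rho$.
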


The importance of Lemma \ref{lem:lemCT} relies on the fact that it allows to locally reduce the complexity of the partition $\mathcal{W}$. Indeed, using Lemma \ref{lem:lemCT} one can prove that for $\hno$-a.e. $x\in J_w$, there exists a ball $B_r(x)$ such that $B_r(x)=W_i\cap W_j\cap J_w$, for some $i,j\in\N$. Thus, in $B_r(x)$ the partition problem becomes a problem of least area with a volume term, leading to \eqref{eq:clousure}, as well as to regularity properties of the interfaces $\rb W_i$.
In the case where the measure of the interfaces is weighted (as in the first term of \eqref{eq:functf}), a similar result as Lemma \eqref{lem:lemCT} holds under the additional assumption that $|a_i-a_j|<|a_i-a_k|+|a_k-a_j|$, whenever the indexes $i,j,k$ are different (see \cite{Leo}).\\

The main result of the paper is the following existence and regularity result for a solution to the minimization problem \eqref{eq:kcolors}.
\begin{theorem}\label{thm:kcolors}
Let $\o\subset\R^N$ be an open connected set with Lipschitz boundary. Let $p\geq1$ and $f\in L^p(\o;\R^M)$ be such that $L(f\cdot e)\in L^p(\o)$.
Then the minimization problem \eqref{eq:kcolors} admits a solution $u\in BV(\o\,;A)$, where $A:=\{ a_1,\dots,a_k\}\subset\R^M$, \emph{i.e.},
\[
u=\sum_{i=1}^k a_i\chi_{\o_i}\,,
\]
with $\o_i=\o_i(1)$ (the points of density $1$ for $\o_i$, see Definition \ref{def:ptdensity}), for every $i=1,\dots,k$.

Assume, in addition, that
\begin{itemize}
\item[(H1)] $f\in L^{q}(\o;\R^M)$,
\item[(H2)] $L(f\cdot e)\in L^{q}(\o;\R^M)$,
\end{itemize}
for some $q\geq N(p-1)$. If a solution $u$ is such that $|a_i-a_j|<|a_i-a_k|+|a_k-a_j|$ whenever the indexes $i,j,k$ are different, then the following hold:
\begin{itemize}
\item[(i)] $\o_i$ is open, for every $i=1,\dots,k$, and thus $J_u=\overline{J_u}\cap\o$,
\item[(ii)] each $\partial^*\Omega_i$ is the union of relatively open sets of class $C^{1,\alpha}$, where $\alpha:=\frac{1}{2}\left(1-\frac{N(p-1)}{q}\right)$ and a closed singular set of $\hno$ measure zero,
\item[(iii)] there exists $\beta>0$, depending on $\o, D, N, \lambda, \mu, \|f\|_{L^{q}}, \|L(f\cdot e)\|_{L^{q}}$, such that
\[
\liminf_{\rho\to 0}\frac{\hno(J_u\cap B_\rho(x))}{\rho^{N-1}}\geq\beta\,,
\]
for every $x\in \overline{J_u}\cap\o$.
\end{itemize}
\end{theorem}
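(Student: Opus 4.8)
The plan is to establish existence by the Direct Method, and then to derive (i)--(iii) by recognising a minimiser as an almost-minimiser of the weighted partition perimeter, to which the (weighted) elimination lemma applies.

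\emph{Existence.} I would fix $k$ and take a minimising sequence $u^n=\sum_{i=1}^k a_i^n\chi_{\o_i^n}$, with $\o_i^n=\{u^n=a_i^n\}$. Since the first term of \eqref{eq:functf} is exactly the total variation $|Du^n|(\o)$, the bound $\f(u^n)\le C$ gives $|Du^n|(\o)\le C$. To get $L^1$-compactness I must bound the colours carried by positive-measure regions. Any index with $|\o_i^n\meno D|>0$ is pinned, because $\int_{\o_i^n\meno D}|a_i^n-f|^p\dx\le C$ and $f\in L^p$ force $|a_i^n|$ to be bounded in terms of $|\o_i^n\meno D|$. For the remaining indices, whose regions sit (up to null sets) inside $D$, I argue that no subfamily can escape to infinity: if $S^n$ collects the indices with $|a_i^n|\to\infty$, the cluster $\bigcup_{i\in S^n}\o_i^n$ has positive measure strictly less than $|\o|$ (using $|\o\meno D|>0$), hence a relative interface of positive, bounded-below $\hno$-measure with the complementary bounded-colour phases; the term $|a_i^n-a_j^n|\,\hno(\rb\o_i^n\cap\rb\o_j^n\cap\o)$ would then blow up, contradicting $|Du^n|(\o)\le C$. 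Thus $\|u^n\|_{L^\infty}\le R$ for some $R$ depending only on the data, so $(u^n)$ is bounded in $\sp$.

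\emph{Passage to the limit.} By BV-compactness I extract $u^n\to u$ in $L^1$ and, along a subsequence, $a_i^n\to a_i$ with $u^n\to u$ a.e. Lower semicontinuity of the total variation shows the perimeter term of $\f(u)$ does not exceed its $\liminf$ along $(u^n)$, while the two fidelity integrands are dominated by $C(1+|f|^p)$ and $C(1+|L(f\cdot e)|^p)$, both integrable; by dominated convergence, using the monotonicity (hence a.e.\ continuity) of $L$, the fidelity terms pass to the limit. Hence $\f(u)\le\inf$. Some colours may coalesce in the limit, but since an admissible palette $A$ need only \emph{contain} the values of $u$, I recover $\hno(A)=k$ by padding the limiting palette with arbitrary distinct extra points, which leaves $\f$ unchanged. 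Writing $u=\sum_i a_i\chi_{\o_i}$ with $\o_i=\o_i(1)$ finishes existence.

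\emph{Almost-minimality.} For the regularity part I fix a minimiser $u$ with palette $A=\{a_1,\dots,a_k\}$ satisfying the strict triangle inequality and note that $u$ minimises $\f$ among all $v\in BV(\o;A)$ with the \emph{same} palette. Thus, for every ball $B_r(x)\subset\o$ and every competitor $v$ with $\{v\ne u\}\Subset B_r(x)$, writing $\o_i'=\{v=a_i\}$ and cancelling the fidelity contributions outside $B_r(x)$, one gets
\[
\sum_{i<j}|a_i-a_j|\,\hno(\rb\o_i\cap\rb\o_j\cap B_r(x))\le \sum_{i<j}|a_i-a_j|\,\hno(\rb\o_i'\cap\rb\o_j'\cap B_r(x))+\Delta,
\]
with $\Delta:=|\f_{\mathrm{fid}}(v)-\f_{\mathrm{fid}}(u)|$. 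Since $v,u$ take values in the bounded set $A$, the elementary bound $\bigl||a-f|^p-|b-f|^p\bigr|\le C(1+|f|^{p-1})|a-b|$ (and its analogue for $L(\cdot\,e)$, as $L$ is bounded on the bounded range of $\{a_i\cdot e\}$) controls the fidelity integrand on $\{v\ne u\}$ by $C(1+|f|^{p-1}+|L(f\cdot e)|^{p-1})$. Hölder's inequality with exponent $q/(p-1)$ together with (H1)--(H2) then yields
\[
\Delta\le C\int_{B_r(x)}\bigl(1+|f|^{p-1}+|L(f\cdot e)|^{p-1}\bigr)\dx\le \Lambda\, r^{\,N-1+2\alpha},\qquad 2\alpha:=1-\tfrac{N(p-1)}{q},
\]
so $u$ is a $(\Lambda,2\alpha)$-almost-minimiser of the weighted partition perimeter.

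\emph{Elimination, density, and regularity, with the main obstacle.} The strict triangle inequality makes this weighted perimeter a uniformly elliptic partition energy, so I would invoke the weighted form of the Congedo--Tamanini elimination lemma (Lemma~\ref{lem:lemCT}, in the version of \cite{Leo}): near $\hno$-a.e.\ jump point it reduces the partition, up to a small-measure set, to two phases, and it produces the uniform lower density estimate (iii) at every $x\in\overline{J_u}\cap\o$, with $\beta$ depending on the stated quantities. The matching upper bound and the elimination step show each set of density-$1$ points $\o_i$ is open, so $J_u=\o\meno\bigcup_i\o_i$ is relatively closed, i.e.\ $J_u=\overline{J_u}\cap\o$, which is (i). Once the configuration is locally two-phase, $\rb\o_i$ is an almost-minimal interface with deviation exponent $2\alpha$, and De Giorgi--Tamanini regularity theory (as in \cite{DGCL,TamCon}) gives that $\rb\o_i$ is $C^{1,\alpha}$ outside a closed singular set of zero $\hno$-measure, namely (ii). I expect the crux to be the weighted elimination lemma: the couplings $|a_i-a_j|$ entangle the phases, and one must verify that locally removing a phase is energetically favourable --- it is exactly the strict triangle inequality that guarantees this and lets the density estimates close --- while the second delicate point is tuning the Hölder exponent so that $\Delta=o(r^{N-1})$ with precisely the stated $\alpha$.
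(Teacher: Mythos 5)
Your existence argument contains a genuine gap: the uniform bound $\|u^n\|_{L^\infty}\le R$ for minimizing sequences is not provable, and the step you use to obtain it fails. You argue that if $S^n$ is the set of indices whose colours escape to infinity, then the cluster $W^n:=\bigcup_{i\in S^n}\o_i^n$ has an interface with the bounded-colour phases of $\hno$-measure bounded below, so that the weighted perimeter blows up. But the relative isoperimetric inequality only bounds $\per(W^n;\o)$ from below by $C\min\{|W^n|,|\o\meno W^n|\}^{\frac{N-1}{N}}$, and nothing in the energy bound prevents $|W^n|\to 0$: an escaping colour supported on a ball $B_{r_n}\subset D$ with $r_n\to0$ costs a weighted perimeter of order $|a^n_i|\,r_n^{N-1}$ and, when $L$ is bounded, a fidelity term of order $r_n^N$, both of which can vanish (take $|a^n_i|=n$, $r_n=1/n$, $N\ge3$). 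Your earlier ``pinning'' step suffers from the same non-uniformity: the bound on $|a_i^n|$ degenerates as $|\o_i^n\meno D|\to0$. The correct statement, which is what the paper proves in Steps 1--2 of its proof, is a dichotomy: up to subsequences, for each $i$ either $\{a_i^n\}_{n\in\N}$ is bounded or $|\o_i^n|\to0$; this is obtained by induction on the family of indices with bounded colours, applying the isoperimetric inequality to the union $V_n$ of their regions. The limit construction must then change accordingly: phases with escaping colours are discarded (their regions vanish), the surviving colours may coalesce (the paper's Case 2, which merges the corresponding regions into a coarser partition), and only then is the palette completed to cardinality $k$. Your ``padding'' remark handles coalescence but not escape to infinity, and your domination argument for the fidelity terms (by $C(1+|f|^p)$, etc.) also presupposes the unavailable $L^\infty$ bound.

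The regularity half of your proposal follows the same route as the paper: a solution of \eqref{eq:kcolors} is in particular a minimizer for its own fixed palette, so one reduces to the fixed-palette statement (the paper's Theorem \ref{thm:kfixedcolors}); your H\"older computation giving the deviation $\Lambda r^{N-1+2\alpha}$ with $2\alpha=1-\frac{N(p-1)}{q}$ matches the paper's, and the derivation of (i)--(iii) from elimination plus Tamanini's regularity theorem and the generalized isoperimetric inequality is as in the paper. The serious objection here is that you \emph{invoke} an elimination lemma for the weighted partition energy with fidelity terms (equivalently, for almost-minimizers of the weighted partition perimeter). No such lemma exists in the sources you point to: Congedo--Tamanini (Lemma \ref{lem:lemCT}) treats unweighted interfaces with a volume term, while Leonardi treats weighted interfaces but only with $\lambda=\mu=0$. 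Proving this extension is precisely the main technical content of the paper (Theorem \ref{thm:elimination}), obtained by combining Leonardi's minimal-cut Lemma \ref{lem:mincut} with a differential inequality for $\alpha(r)=|V\cap B_r(x)|$; in a complete write-up this step must be proved, not cited.
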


\begin{remark}
The additional regularity assumptions on $f$ and on $L(f\cdot e)$ that we require in Theorem \ref{thm:kcolors}, $(H1)$ and $(H2)$ respectively, are similar in spirit to the ones required by Congedo and Tamanini in \cite{TamCon} for partial regularity of the interfaces for the minimization problem \eqref{eq:minpbCT}. In their case they have to require $f\in L^{Np}(\o)$, and they provide a counterexample to the regularity of interfaces if $f$ is less integrable. Here we need lower integrability of $f$ and of $L(f\cdot e)$ due to the fact that our functional has a weighted perimeter term rather than just the perimeter itself.
\end{remark}

In order to get the regularity properties claimed in Theorem \ref{eq:kcolors}, we first consider the case in which the set $A=\{a_1,\dots,a_k\}\subset\R^M$ is fixed apriori. By rephrasing the problem in the space of Caccioppoli partitions of $\o$, we obtain in Theorem \ref{thm:kfixedcolors} the existence of solutions for the minimization problem
\[
\min\left\{\, \f(u) \,:\, u\in BV(\o\,;\, A) \,\right\}\,,
\]
by using the lower semi-continuity result for functionals defined on partitions due to Ambrosio and Braides (see \cite{AmbBra2}).
This framing of the problem allows us to prove regularity properties of a minimizer (see Theorem \ref{thm:kfixedcolors}).
The main technical result of this paper is Theorem \ref{thm:elimination}, that extends to \eqref{eq:functf} the elimination lemma proved by Leonardi
in the case $\lambda=\mu=0$ (see \cite[Theorem 3.1]{Leo}).

We also present two results of independent interest: in Theorem \ref{thm:exi} we prove the existence of a solution to the minimization problem
\[
\min\left\{\, \f(u) \,:\, u\in BV(\o\,;\, \R^M) \,\right\}\,,
\]
thus extending the one of Fornasier and March (see Theorem \ref{thm:exi}) since we don't assume any apriori bound on the given image $f$.
Finally, in Proposition \ref{prop:nontri} we characterize the functions $L:\R\to[0,\infty)$ for which the functional $\f$ is non trivial.\\

Further regularity properties of minimizers for the functional \eqref{eq:functf} in the case where no restrictions on the class of minimizers, as well as in the particular case of finitely many admissible colors for the reconstruction, are currently under investigation. Also a characterization of the piecewise constant functions $f$ that can be obtained as minimizers of $\f$, whose jump set is the union of finitely many Lipschitz curves in the spirit of the result of \cite{FLMM}, is being undertaken.
Finally, a study of the model where the total variation is replaced by an anisotropic functional of the total variation of $u$ will be carried out in a future work.\\

The paper is organized as follows.
After recalling basic notions and background in Section \ref{sec:prel}, an existence result for the minimization problem is presented in Section \ref{sec:ex}, while non triviality is studied in Section \ref{sec:nontri}.
Finally, Section \ref{sec:pc} is devoted to the study of the existence and regularity properties of minimizers in the case in which only finitely many colors are allowed for the reconstructed image.


\section{Preliminaries}\label{sec:prel}

In this section we recall some basic notions on BV functions and sets of finite perimeter. For a reference see, for instance, \cite{AFP}.

\begin{definition}
Let $A\subset\R^N$ be an open set. A function $u\in L^1(A\,;\,\R^M)$ is said to be of \emph{bounded variation} if
\[
|Du|(A):=\sup\left\{\, \int_A u\cdot\mathrm{div}\,\varphi\dx
    : \varphi\in \left[C^\infty_c(A\,;\,\R^N)\right]^M\,, |\varphi|_{L^\infty}\leq1
      \,\right\}<\infty\,.
\]
We write $u\in BV(A\,;\,\R^M)$.
\end{definition}

\begin{remark}
For a Borel set $B\subset A$, the function $B\to |Du|(B)$ is a finite Radon measure that is lower semi-continuous with respect to the $L^1$ convergence of sets.
\end{remark}

\begin{definition}\label{def:jump}
Let $u\in BV(\o;\R^M)$ and $x\in\o$.
We say that $x$ is an \emph{approximate jump point} of $u$ if there exists $a,b\in\R^M$, with $a\neq b$, and $\nu\in \mathbb{S}^{N-1}$ such that
\begin{align*}
&\lim_{r\to0}\frac{1}{|B^+_r(x,\nu)|} \int_{B^+_r(x,\nu)}|u(y)-a| \dy=0\,,\\
&\lim_{r\to0}\frac{1}{|B^-_r(x,\nu)|} \int_{B^-_r(x,\nu)}|u(y)-b| \dy=0\,,
\end{align*}
where $B^\pm_r(x,\nu):=\{ y\in \R^N \,:\, \langle y-x,\pm \nu \rangle >0 \}$.
We denote by $J_u$, the \emph{jump set} of $u$, the set of points of $\o$ where this property does not hold.
\end{definition}

A special case of functions of bounded variation are those that are characteristic functions of sets of finite perimeter.

\begin{definition}
Let $A\subset\R^N$ be a Borel set.
A measurable set $E\subset\R^N$, with $|E\cap A|<\infty$, is said to have \emph{finite perimeter} in $A$ if $\chi_E\in BV(A)$. In this case, we denote by $\mathcal{P}(E\,;A)$ the total variation of $\chi_E$ in $A$, $|D\chi_E|(A)$.
\end{definition}

In order to state the structure theorem for sets of finite perimeter, we first need some definitions.

\begin{definition}\label{def:ptdensity}
Let $E\subset\R^N$ be a Borel set, and let $t\in[0,1]$.
We say that $x\in\R^N$ is a \emph{point of density $t$ for $E$} if
\[
\lim_{r\to0}\frac{|E\cap B_r(x)|}{\omega_N r^N}=t\,.
\]
The set of points of density $t$ of $E$ will be denoted by $E(t)$.
\end{definition}

\begin{definition}\label{def:rb}
Let $E\subset\R^N$ be a set of finite perimeter in some open set $A\subset\R^N$. We define the \emph{reduced boundary} $\rb E$ of $E$ as the set of points $x\in\mathrm{supp}|D\chi_E|\cap A$ such that the limit
\[
\nu_E(x):=\lim_{r\to0}\frac{D\chi_E(B_r(x))}{|D\chi_E|(B_r(x))}
\]
exists and $|\nu_E(x)|=1$.
Here $\mathrm{supp}|D\chi_E|$ denotes the support of the measure $|D\chi_E|$.
We call $\nu_E(x)$ the \emph{generalized inner normal} to $E$ at $x$.
\end{definition}

\begin{definition}\label{def:h}
Let $k\in\N$. With $\mathcal{H}^k(E)$ we denote the $k^{th}$-Hausdorff measure of a set $E\subset\R^N$.
\end{definition}

\begin{definition}
Let $E\subset\R^N$ be an $\mathcal{H}^k$-measurable set. We say that $E$ is
\emph{countably $\mathcal{H}^k$-rectifiable}, if $\mathcal{H}^k(E)<\infty$ and there exist $\{f_n\}_{n\in\N}$, with $f_n:\R^k\to\R^N$ a Lipschitz function for all $n\in\N$, such that
\[
\mathcal{H}^k \left(\, E\setminus\bigcup_{n\in\N}f_n(\R^k)\,\right)=0\,.
\]
\end{definition}

The following structure theorem for sets of finite perimeter is due to De Giorgi (see, for instance, \cite[Theorem 3.59]{AFP})

\begin{theorem}
Let $E\subset\R^N$ be a set of finite perimeter in an open set $A\subset\R^N$.
Then $\rb E\cap A$ is $\hno$-rectifiable and $|D\chi_{E\cap A}|=\hno\restr\left(\rb E\cap A\right)$.
\end{theorem}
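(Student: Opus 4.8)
The plan is to follow De Giorgi's blow-up method. Fix $x\in\rb E\cap A$ and, after a translation and rotation, assume $x=0$ and $\nu_E(0)=e_N$. First I would rescale, setting $E_r:=E/r$, and show that the family $\{\chi_{E_r}\}_{r>0}$ is precompact in $L^1_{\mathrm{loc}}(\R^N)$. The uniform local perimeter bound needed for compactness comes from the relative isoperimetric inequality, which at a reduced boundary point also yields the lower density estimates $|E\cap B_r(0)|\geq c\,r^N$ and $|B_r(0)\meno E|\geq c\,r^N$ for small $r$. By the compactness theorem in $BV$, along a subsequence $r_j\to0$ one has $\chi_{E_{r_j}}\to\chi_F$ in $L^1_{\mathrm{loc}}$ for some set $F$ of locally finite perimeter, with $D\chi_{E_{r_j}}\wtom D\chi_F$.

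The core step, and the main obstacle, is to identify every blow-up limit $F$ with the half-space $H:=\{\,y\in\R^N:\langle y,e_N\rangle\leq0\,\}$. This is where the definition of $\rb E$ is essential: because the averaged normals $D\chi_E(B_r)/|D\chi_E|(B_r)$ converge to $e_N$, and these ratios are scale invariant, one checks that in the limit the measures $\langle D\chi_{E_{r_j}},e_N\rangle$ and $|D\chi_{E_{r_j}}|$ have the same total mass on each ball, so that $\langle\nu_F,e_N\rangle=1$ holds $\hno$-a.e.\ on $\rb F$. A set of locally finite perimeter whose generalized normal is $\hno$-a.e.\ equal to a fixed vector must be a half-space orthogonal to that vector, and the lower density bounds force it to pass through the origin, giving $F=H$. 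Since the limit is independent of the subsequence, the full blow-up converges, and as a by-product one extracts the two density identities
\[
\lim_{r\to0}\frac{|D\chi_E|(B_r(x))}{\omega_{N-1}r^{N-1}}=1\,,\qquad \lim_{r\to0}\frac{|E\cap B_r(x)|}{\omega_N r^N}=\tfrac12\,,
\]
valid at every $x\in\rb E\cap A$.

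For rectifiability I would use the blow-up just obtained: the hyperplane $\{\,y:\langle y,\nu_E(x)\rangle=0\,\}$ is an approximate tangent plane to $\rb E$ at $x$. By the standard rectifiability criterion, a set of locally finite $\hno$ measure possessing an approximate tangent plane at $\hno$-a.e.\ point is countably $\hno$-rectifiable; finiteness of $\hno(\rb E\cap A)$ follows from the first density identity together with a covering argument comparing $\hno\restr\rb E$ with the finite Radon measure $|D\chi_E|$, which also gives $\hno(\rb E\cap A)\leq C\,|D\chi_E|(A)<\infty$. This proves the first assertion.

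Finally, for the measure identity I would appeal to the Besicovitch differentiation theorem. The measure $|D\chi_E|$ is concentrated on $\rb E\cap A$: by Besicovitch the limit defining $\nu_E$ exists with modulus $1$ at $|D\chi_E|$-a.e.\ point, so $|D\chi_E|\bigl((\supp|D\chi_E|\meno\rb E)\cap A\bigr)=0$. The first density identity states precisely that the Radon--Nikodym derivative of $|D\chi_E|$ with respect to $\hno\restr(\rb E\cap A)$ equals $1$ at every point of $\rb E\cap A$. Combining these two facts yields $|D\chi_E|=\hno\restr(\rb E\cap A)$ as measures on $A$, which is the second assertion, since $\chi_{E\cap A}$ and $\chi_E$ coincide on $A$ and hence their variation measures agree on Borel subsets of $A$.
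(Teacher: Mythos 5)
The paper itself does not prove this statement: it is recalled as De Giorgi's structure theorem with a citation to \cite[Theorem 3.59]{AFP}, and the blow-up argument you outline is precisely the classical proof of that cited result, so your overall route is the intended one and its architecture (compactness of rescalings, identification of blow-up limits as half-spaces via scale invariance of the ratios $D\chi_E(B_r)/|D\chi_E|(B_r)$, density identities, rectifiability, Besicovitch differentiation for the measure identity) is sound.

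Three points would need repair in a literal execution. First, a convention slip: since the paper's Definition \ref{def:rb} makes $\nu_E$ the generalized \emph{inner} normal, the blow-up limit at a point with $\nu_E(x)=e_N$ is $\{y:\langle y,e_N\rangle\geq 0\}$, not $\{y:\langle y,e_N\rangle\leq 0\}$. Second, the uniform perimeter bound $|D\chi_E|(B_\rho(x))\leq C\rho^{N-1}$ needed for compactness does \emph{not} follow from the relative isoperimetric inequality, which bounds volume by perimeter and hence goes in the wrong direction; it follows from the defining property of the reduced boundary, which gives $|D\chi_E|(B_\rho(x))\leq 2\,|D\chi_E(B_\rho(x))|$ for all small $\rho$, combined with the Gauss--Green estimate $|D\chi_E(B_\rho(x))|\leq \hno\bigl(E(1)\cap\partial B_\rho(x)\bigr)\leq N\omega_N\rho^{N-1}$, valid for a.e.\ $\rho$; the isoperimetric inequality then enters only in the lower volume density estimates, as you use them. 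Third, and most substantively, the rectifiability step is circular as phrased: what the blow-up provides is a tangent plane, in the sense of weak-$*$ convergence of rescaled measures, to $|D\chi_E|$, whereas the approximate-tangent-plane criterion you invoke requires control of the blow-ups of $\hno\restr(\rb E\cap A)$ --- and relating that measure to $|D\chi_E|$ is essentially the second assertion of the theorem. The standard fix, which is how the cited proof proceeds, is to apply the rectifiability criterion for Radon measures with multiplicity-one tangent spaces (\cite[Theorem 2.83]{AFP}) directly to $\mu=|D\chi_E|$: by your own Besicovitch step $\mu$ is concentrated on $\rb E\cap A$, so the criterion yields at once that $\mu=\hno\restr S$ for a countably $\hno$-rectifiable set $S$ coinciding $\hno$-a.e.\ with $\rb E\cap A$, i.e.\ both assertions simultaneously; alternatively one can follow De Giorgi and deduce from the $L^1_{\mathrm{loc}}$ convergence of the rescaled sets together with the density estimates a two-sided cone condition on $\rb E$, covering it by countably many Lipschitz graphs. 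Relatedly, in your final paragraph the identification of the Radon--Nikodym derivative of $|D\chi_E|$ with respect to $\hno\restr(\rb E\cap A)$ with the spherical density $\lim_{r\to0}|D\chi_E|(B_r(x))/(\omega_{N-1}r^{N-1})$ silently uses that a countably $\hno$-rectifiable set with locally finite measure has $\hno$-density one at $\hno$-a.e.\ of its points; this is a genuine theorem and should be cited, not treated as a tautology.
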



\section{Setting of the Problem}\label{sec:set}

Let $\o\subset\R^N$ be a bounded connected open set with Lipschitz boundary, and let $D\subset\o$ be a Borel set with non empty interior and such that $|\o\meno D|>0$.
For $M\in\N$ fixed, define the functional $\f:\sp\rightarrow[0,+\infty]$ by
\begin{equation}\label{eq:f}
\f(u):=|Du|(\o)+\lambda\int_{\o\,\meno \,D}|u-f|^p\dx + \mu\int_D \bigl|\, L(u\cdot e)-L(f\cdot e)  \,\bigr|^p\dx\,,
\end{equation}
where $\lambda, \mu>0$, $p\in[1,+\infty)$ and $e\in\R^M$, with $|e|=1$, are the parameters of the model.
The function $f\in L^p(\o;\R^M)$ is the given image, and $L:\R\rightarrow[0,+\infty)$ is a continuous function.
Here $u\mapsto L(u\cdot e)$ plays the role of a nonlinear distortion.
We consider the minimization problem
\begin{equation}\label{eq:minpb}
\min_{u\in BV(\o;\R^M)} \f(u)\,.
\end{equation}
When relevant, we will stress the dependence of $\f$ and of the above minimization problem on the initial data $f$, by referring to it as the functional and minimization problem relative to $f$.


\section{Existence of a solution for the minimization problem}\label{sec:ex}

This section is devoted to showing that the minimization problem \eqref{eq:minpb} admits a solution. 
The proof relies on the following Poincar\'{e} type of inequality.

\begin{lemma}\label{lem:Poi}
There exists $C=C(\o,D)>0$ such that for all
$v\in \sp$ it holds
\[
\|v\|_{\sp}\leq C\left[\, |Dv|(\o) + \|v\|_{L^1(\o\,\meno\, D;\R^M)}  \,\right]\,.
\]
\end{lemma}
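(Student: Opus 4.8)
The plan is to prove the inequality
\[
\|v\|_{\sp}\leq C\left[\, |Dv|(\o) + \|v\|_{L^1(\o\,\meno\, D;\R^M)}  \,\right]
\]
by contradiction, using a compactness argument that is standard for Poincar\'{e}-type estimates in $BV$. Recall that the $BV$ norm is $\|v\|_{\sp}=\|v\|_{L^1(\o;\R^M)}+|Dv|(\o)$, so since the right-hand side already controls the total variation term, the heart of the matter is to bound $\|v\|_{L^1(\o;\R^M)}$ by the right-hand side. Suppose the claim fails. Then for every $n\in\N$ there exists $v_n\in\sp$ with
\[
\|v_n\|_{\sp} > n\left[\, |Dv_n|(\o) + \|v_n\|_{L^1(\o\,\meno\, D;\R^M)}  \,\right]\,.
\]
Normalize by setting $w_n:=v_n/\|v_n\|_{\sp}$, so that $\|w_n\|_{\sp}=1$ for all $n$, while
\[
|Dw_n|(\o) + \|w_n\|_{L^1(\o\,\meno\, D;\R^M)} < \frac{1}{n}\,.
\]

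Next I would extract a convergent subsequence. Since $\|w_n\|_{\sp}=1$, the sequence is bounded in $\sp$, and because $\o$ is a bounded open set with Lipschitz boundary, the compact embedding $BV(\o;\R^M)\hookrightarrow L^1(\o;\R^M)$ furnishes a subsequence (not relabeled) and a limit $w\in\sp$ with $w_n\to w$ strongly in $L^1(\o;\R^M)$. From the displayed bound, $|Dw_n|(\o)\to 0$, so by the lower semicontinuity of the total variation with respect to $L^1$ convergence (the Remark following the definition of $BV$) we get $|Dw|(\o)\le\liminf_n |Dw_n|(\o)=0$. Since $\o$ is connected, this forces $w$ to be constant, say $w\equiv c\in\R^M$. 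On the other hand, $\|w_n\|_{L^1(\o\,\meno\, D;\R^M)}\to 0$, and strong $L^1(\o;\R^M)$ convergence implies convergence of the restriction to $\o\meno D$, so $\|w\|_{L^1(\o\,\meno\, D;\R^M)}=0$, i.e. $w=0$ a.e.\ on $\o\meno D$. Because $|\o\meno D|>0$ and $w\equiv c$, we conclude $c=0$, hence $w\equiv 0$.

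This is the contradiction: on one side, strong $L^1(\o;\R^M)$ convergence together with $|Dw_n|(\o)\to 0$ gives $\|w_n\|_{\sp}=\|w_n\|_{L^1(\o;\R^M)}+|Dw_n|(\o)\to\|w\|_{L^1(\o;\R^M)}+0=0$; on the other side $\|w_n\|_{\sp}=1$ for every $n$. This contradiction proves that the asserted constant $C=C(\o,D)$ exists, completing the argument.

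The step I expect to require the most care is verifying that the limit $w$ is genuinely constant and then pinned to zero, i.e.\ correctly combining the vanishing of the total variation (which uses connectedness of $\o$) with the vanishing of the $L^1$ mass on $\o\meno D$ (which uses $|\o\meno D|>0$). The compact embedding itself is classical for Lipschitz domains, but it is essential that both geometric hypotheses on $\o$ and $D$ enter precisely here; without $|\o\meno D|>0$ the constant function could be nonzero and the inequality would fail. The dependence of $C$ on $\o$ and $D$ is implicit in the compactness/connectedness constants and need not be made explicit.
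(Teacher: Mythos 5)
Your proof is correct and follows essentially the same route as the paper: a contradiction argument via normalization, the compact embedding $BV(\o;\R^M)\hookrightarrow L^1(\o;\R^M)$, lower semicontinuity of the total variation, connectedness of $\o$ forcing the limit to be constant, and $|\o\meno D|>0$ pinning that constant to zero. The only cosmetic difference is the normalization (you divide by $\|v_n\|_{\sp}$ and derive the contradiction from $\|w_n\|_{\sp}=1$ versus $\|w_n\|_{\sp}\to0$, whereas the paper divides by $\|v_n\|_{L^1(D;\R^M)}$ and contradicts $\|\bar v_n\|_{L^1(D;\R^M)}\equiv1$), which changes nothing of substance.
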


\begin{proof}
By arguing component by component, it suffices to prove the result in the case $M=1$.

Assume that the statement of the lemma does not hold. Then, there would exists a sequence $\{v_n\}_{n\in\N}\subset\sp$
such that
\begin{equation}\label{eq:absineq}
\|v_n\|_{\sp}> n\left[\, |Dv_n|(\o) + \|v_n\|_{L^1(\o\,\meno\, D)}  \,\right]\,.
\end{equation}
In particular, $\|v_n\|_{L^1(D)}>0$.
Set
\[
\bar{v}_n:=\frac{v_n}{\|v_n\|_{L^1(D)}}\,.
\]
Then \eqref{eq:absineq} becomes
\[
|D\bar{v}_n|(\o)+1+\|\bar{v}_n\|_{L^1(\o\,\setminus\, D)}> n\left[\, |D\bar{v}_n|(\o) + \|\bar{v}_n\|_{L^1(\o\,\meno\, D)}  \,\right]\,,
\]
and so
\begin{equation}\label{eq:absineq1}
|D\bar{v}_n|(\o)+\|\bar{v}_n\|_{L^1(\o\,\meno\, D)}<\frac{1}{n-1}\,,
\end{equation}
and since $\|\bar{v}_n\|_{L^1(D)}=1$, we obtain
\[
|D\bar{v}_n|(\o)+\|\bar{v}_n\|_{L^1(\o)}<1+\frac{1}{n-1}\,.
\]
Hence, up to a (not relabeled) subsequence, it holds that $\bar{v}_n\rightarrow \bar{v}$ in $L^1(\o)$ for some $\bar{v}\in\sp$.
By \eqref{eq:absineq1}, we have
\[
|D\bar{v}|(\o)=0\,,\quad\quad\quad \|\bar{v}\|_{L^1(\o\,\meno\, D)}=0\,,
\]
thus $\bar{v}\equiv0$. But this is in contradiction with the fact that $\|\bar{v}_n\|_{L^1(D)}\equiv 1$ for all $n\in\N$ implies $\|\bar{v}\|_{L^1(D)}=1$.
\end{proof}

\begin{theorem}\label{thm:exi}
The minimization problem \eqref{eq:minpb} admits a solution.
\end{theorem}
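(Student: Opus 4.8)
The plan is to apply the Direct Method of the Calculus of Variations, using Lemma \ref{lem:Poi} to obtain compactness of a minimizing sequence and then establishing lower semicontinuity of $\f$ term by term. Since $\f\geq0$, the infimum $m:=\inf\{\f(u):u\in\sp\}$ belongs to $[0,+\infty]$; if $m=+\infty$ then every $u\in\sp$ is trivially a minimizer, so I may assume $m<+\infty$ and fix a minimizing sequence $\{u_n\}_{n\in\N}\subset\sp$ with $\f(u_n)\to m$ and, without loss of generality, $\f(u_n)\leq m+1$ for all $n$.

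First I would extract a uniform $BV$ bound on $\{u_n\}$ in order to gain compactness. From $\f(u_n)\leq m+1$ I immediately obtain $|Du_n|(\o)\leq m+1$ and $\lambda\int_{\o\meno D}|u_n-f|^p\dx\leq m+1$. Since $\o$ has finite measure, H\"older's inequality turns the latter $L^p$ bound into a bound on $\|u_n-f\|_{L^1(\o\meno D;\R^M)}$, and because $f\in L^p(\o;\R^M)\subset L^1(\o;\R^M)$, this yields a uniform bound on $\|u_n\|_{L^1(\o\meno D;\R^M)}$. Feeding these two bounds into the Poincar\'e-type inequality of Lemma \ref{lem:Poi} gives $\sup_n\|u_n\|_{\sp}<\infty$.

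By the compact embedding of $BV$ into $L^1$ on the bounded Lipschitz domain $\o$, up to a not relabelled subsequence I have $u_n\to u$ in $L^1(\o;\R^M)$ for some $u\in\sp$, and passing to a further subsequence I may also assume $u_n\to u$ pointwise $\hn$-a.e. in $\o$. It then remains to check that $\f$ is sequentially lower semicontinuous along this sequence. The total variation term is handled by its lower semicontinuity with respect to $L^1$ convergence (the Remark following the definition of $BV$), so $|Du|(\o)\leq\liminf_n|Du_n|(\o)$. For the two remaining terms I would argue via Fatou's lemma: on $\o\meno D$ one has $|u_n-f|^p\to|u-f|^p$ a.e., while on $D$ the continuity of $L$ gives $L(u_n\cdot e)\to L(u\cdot e)$ a.e., hence $|L(u_n\cdot e)-L(f\cdot e)|^p\to|L(u\cdot e)-L(f\cdot e)|^p$ a.e.; applying Fatou to each nonnegative integrand yields lower semicontinuity of both the fidelity and the distortion terms. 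Summing the three inequalities gives $\f(u)\leq\liminf_n\f(u_n)=m$, and since $u\in\sp$ is admissible, $u$ is a minimizer.

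The main obstacle is precisely the lower semicontinuity of the distortion term $\int_D|L(u\cdot e)-L(f\cdot e)|^p\dx$: because $L$ is merely continuous and in general neither convex nor concave, the classical convexity-based lower semicontinuity theorems do not apply. The key observation resolving this is that the uniform $BV$ bound yields compactness in $L^1$ and hence an a.e.-convergent subsequence, along which continuity of $L$ propagates the convergence to the integrands, so that Fatou's lemma supplies the required inequality with no convexity assumption on $L$. A minor point to keep track of is ensuring that the minimizing sequence has finite energy, equivalently that $m<\infty$, which is why the degenerate case $m=+\infty$ is dispatched separately at the outset.
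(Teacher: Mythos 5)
Your proposal is correct and follows essentially the same argument as the paper: uniform bounds from the energy, the Poincar\'e-type inequality of Lemma \ref{lem:Poi} to get $BV$ compactness, lower semicontinuity of the total variation under $L^1$ convergence, and Fatou's lemma on an a.e.-convergent subsequence (with continuity of $L$) for the two volume terms. The only cosmetic difference is that you dispatch the degenerate case $m=+\infty$ explicitly, whereas the paper simply assumes the limit of the energies is finite without loss of generality.
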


\begin{proof}
Let $\{u_n\}_{n\in\N}$ be a minimizing sequence. Without loss of generality, we can assume that
\[
\Lambda:=\lim_{n\rightarrow+\infty}\f(u_n)<+\infty\,.
\]
Then
\begin{equation}\label{eq:est1}
\sup_{n\in\N}|Du_n|(\o)<+\infty\,,
\end{equation}
and
\begin{align}\label{eq:est2}
\int_{\o\,\meno\, D} |u_n|\dx &\leq \int_{\o\,\meno\, D}|f|\dx+\int_{\o\,\meno\, D}|u_n-f|\dx \nonumber\\
&\leq |\,\o\meno\, D|^{\frac{1}{p'}}\Biggl[\,  \Bigl(\,\int_{\o\,\meno\, D}|f|^p\dx \,\Bigr)^{\frac{1}{p}}+
    \Bigl(\,\int_{\o\,\meno\, D}|u_n-f|^p \,\Bigr)^{\frac{1}{p}} \,\Biggr] \nonumber\\
&\leq |\o\,\meno\, D|^{\frac{1}{p'}} \left[\, \|f\|_{L^p(\o)}+\left(\frac{\Lambda}{\lambda}\right)^{\frac{1}{p}} \,\right]\,.
\end{align}
Applying Lemma \ref{lem:Poi} and using \eqref{eq:est1} and \eqref{eq:est2}, we get
\[
\sup_{n\in\N} \|u_n\|_{BV(\Omega;\R^M)}<+\infty\,,
\]
and so there exists $u\in \sp$ such that, up to a (not relabeled) subsequence, $u_n\rightarrow u$ in $L^1(\o;\R^M)$.
The lower semicontinuity of the total variation, together with \eqref{eq:est1}, yields
\begin{equation}\label{eq:lsctv}
|Du|(\o)\leq\liminf_{n\rightarrow+\infty}|Du_n|(\o)<+\infty\,,
\end{equation}
so that $u\in \sp$.
Up to extracting a further (not relabeled) subsequence, we can also assume that $u_n\rightarrow u$ pointwise a.e. in $\o$.
Using Fatou's lemma, we get
\begin{equation}\label{eq:lscg1}
\int_{\o\,\meno\, D}|u-f|^p\dx\leq\liminf_{n\rightarrow+\infty}\int_{\o\,\meno\, D}|u_n-f|^p\dx\,,
\end{equation}
and, recalling that the continuity of $L$ yields that $L(u_n\cdot e)\rightarrow L(u\cdot e)$ pointwise a.e. in $D$,
\begin{equation}\label{eq:lscg2}
\int_D \bigl|\, L(u\cdot e)-L(f\cdot e)  \,\bigr|^p\dx
    \leq\liminf_{n\rightarrow+\infty} 
    \int_D \bigl|\, L(u_n\cdot e)-L(f\cdot e)  \,\bigr|^p\dx\,.
\end{equation}
Hence, by \eqref{eq:lsctv}, \eqref{eq:lscg1} and \eqref{eq:lscg2}, we obtain
\[
\f(u)\leq \liminf_{n\rightarrow+\infty} \f(u_n)=\inf_{BV(\o;\R^M)} \f\,,
\]
so that $u$ is a solution of the minimization problem \eqref{eq:minpb}.
\end{proof}

\begin{remark}
The above existence theorem extends the one obtained in \cite{ForMar}, since we do not assume  apriori bounds on $\| u_n \|_{L^\infty}$ nor any particular behavior of the function $L$ at infinity. 
\end{remark}


\section{Non triviality of the functional}\label{sec:nontri}

In this section we seek to characterize when the functional $\f$ is trivial, \emph{i.e.} $\f\equiv+\infty$, in terms of properties of the nonlinear distortion $L$.

\begin{theorem}\label{prop:nontri}
The following two conditions are equivalent:
\begin{itemize}
\item[(i)] for every $f\in L^p(\o;\R^M)$ there exists $u\in \sp$ such that
$\f(u)<+\infty$, where $\f$ is the functional relative to $f$,
\item[(ii)] it holds
\begin{equation}\label{eq:linearinfty}
\limsup_{|t|\rightarrow+\infty}\frac{L(t)}{|t|}<+\infty\,.
\end{equation}
\end{itemize}
\end{theorem}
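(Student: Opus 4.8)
The plan is to prove the two implications separately, obtaining $(i)\Rightarrow(ii)$ by contraposition. For $(ii)\Rightarrow(i)$ I would argue directly with a constant competitor. Condition \eqref{eq:linearinfty} together with the continuity (hence local boundedness) of $L$ yields constants $a,b\ge0$ with $L(s)\le a|s|+b$ for every $s\in\R$. Given $f\in L^p(\o;\R^M)$, I would test \eqref{eq:f} with $u\equiv0$: the total variation vanishes, the fidelity term equals $\lambda\int_{\o\meno D}|f|^p\dx\le\lambda\|f\|_{L^p}^p<\infty$, and since $|e|=1$ gives $|f\cdot e|\le|f|$, we have $L(f\cdot e)\le a|f|+b\in L^p(\o)$, so the distortion term is bounded by $2^{p-1}\mu\bigl(L(0)^p|D|+\int_D|L(f\cdot e)|^p\dx\bigr)<\infty$. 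Hence $\f(0)<\infty$, which is exactly $(i)$.

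For the converse I would assume $(ii)$ fails, so there is a sequence $t_n$ with $|t_n|\to\infty$ and $c_n:=L(t_n)/|t_n|\to\infty$ (after passing to a subsequence, $c_n^p\ge n^3$), and I would exhibit a single $f\in L^p$ for which $\f\equiv+\infty$. Fix a cube $Q\subset\mathrm{int}\,D$. At each level $n$ I would place in pairwise disjoint subregions of $Q$ a grid of $M_n$ disjoint cubes of side $\ell_n$, mutually separated by empty collars of comparable size, whose union $B_n$ has volume $V_n:=|t_n|^{-p}n^{-2}$, and set $f:=\sum_n t_n\,e\,\chi_{B_n}$ (so $f\equiv0$ off $\bigcup_nB_n$). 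Then $\int_\o|f|^p\dx=\sum_n|t_n|^pV_n=\sum_n n^{-2}<\infty$, so $f\in L^p$; but $\int_D|L(f\cdot e)|^p\dx=\sum_nL(t_n)^pV_n=\sum_nc_n^p\,n^{-2}=+\infty$, so $L(f\cdot e)\notin L^p(D)$. Writing $\rho_n:=\inf\{|t|:L(t)\ge L(t_n)/2\}$ (note $\rho_n\le|t_n|$ and $\rho_n\to\infty$, since $L$ is bounded on compacts while $L(t_n)=c_n|t_n|\to\infty$), I would finally choose $\ell_n$ so small, e.g. $\ell_n:=\rho_n|t_n|^{-p}n^{-3}$, that $\sum_nM_n\rho_n\ell_n^{N-1}=\sum_n\rho_nV_n/\ell_n=+\infty$.

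The heart of the argument is that no $u\in\sp$ can make $\f(u)$ finite. Suppose it did; then $\int_D|L(u\cdot e)-L(f\cdot e)|^p\dx<\infty$, so its contributions over the levels are summable and the per-level budget is $o(1)$. On $B_n$ one has $L(f\cdot e)=L(t_n)$, and wherever $L(u\cdot e)<L(t_n)/2$ the integrand is at least $(L(t_n)/2)^p$; since $(L(t_n)/2)^pV_n=2^{-p}c_n^p\,n^{-2}\ge2^{-p}n\to\infty$ dwarfs the vanishing budget, on all but a negligible proportion of the small cubes $L(u\cdot e)\ge L(t_n)/2$, hence $|u\cdot e|\ge\rho_n$, on at least half of each. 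Symmetrically, finiteness of the distortion on $Q\meno\bigcup_nB_n$ (where $L(f\cdot e)=L(0)$) keeps $u\cdot e$ tame on most of each collar. Thus for a fixed threshold $s\in(\rho_n/2,\rho_n)$ the superlevel set of $|u\cdot e|$ fills a definite fraction of such a cube while its complement fills a definite fraction of the adjacent collar; the relative isoperimetric inequality then bounds its perimeter below by $c\,\ell_n^{N-1}$, and integrating in $s$ via the coarea formula gives a local variation $\ge c\,\rho_n\ell_n^{N-1}$. Summing over the majority of cubes and over $n$ yields $|Du|(\o)\ge|D(u\cdot e)|(\o)\ge c\sum_n\rho_nV_n/\ell_n=+\infty$ (using $|D(u\cdot e)|\le|Du|$, valid since $|e|=1$), contradicting $u\in\sp$. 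Hence $\f\equiv+\infty$ for this $f$, so $(i)$ fails.

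The main obstacle is precisely this quantitative propagation from finiteness of the distortion integral to genuine size information on $u\cdot e$, and its conversion into a total-variation lower bound. Two points require care: a counting estimate ensuring that at each level all but a negligible fraction of the cubes are forced to carry a large value of $u\cdot e$, which relies on the divergence $(L(t_n)/2)^pV_n\to\infty$ designed into the construction; and the relative isoperimetric and coarea step turning the pointwise contrast into perimeter. When $L$ is monotone—the case relevant to the model—the implication ``$L(u\cdot e)$ near $L(0)$ on the collars'' $\Rightarrow$ ``$u\cdot e$ tame there'' is immediate, so the contrast is transparent; for a general continuous $L$ one instead prescribes $f\cdot e$ alternating between two divergent value-sequences $t_n\ne t_n'$ with $L(t_n),L(t_n')$ large and distinct, so that matching forces $u\cdot e$ into well-separated superlevel sets of $L$ on adjacent cells and the forced oscillation again drives $|Du|(\o)$ to infinity.
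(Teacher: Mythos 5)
Your first half is fine: for $(ii)\Rightarrow(i)$ you test with $u\equiv 0$ and use a linear-growth bound $L(s)\le a|s|+b$ obtained from \eqref{eq:linearinfty} and boundedness of $L$ on compact sets; this is exactly the paper's Step 1. The gap is in $(i)\Rightarrow(ii)$. The theorem lives in the setting of Section \ref{sec:set}, where $L$ is only assumed \emph{continuous}, not monotone, and your scattered-cubes construction relies essentially on the step ``finiteness of $\int_{\mathrm{collars}}|L(u\cdot e)-L(0)|^p\dx$ keeps $u\cdot e$ tame on the collars''. For non-monotone $L$ this is false: the sublevel set $\{t:\,L(t)\le L(0)+\delta\}$ need not be bounded even when $\limsup_{|t|\to\infty}L(t)/|t|=+\infty$. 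Take $L(t)=t^2|\sin(\pi t)|$: condition \eqref{eq:linearinfty} fails (along $t_n=n+\tfrac12$), yet $L$ vanishes at every integer, so a competitor $u$ may take enormous values on the collars at zero distortion cost. Worse, near a far-away integer $m$ one has $L(m+s)\approx \pi m^2|s|$, so the set where $L=L(t_n)$ and the set where $L=L(0)=0$ have components at mutual distance $O\bigl(L(t_n)/m^2\bigr)\to 0$ as $m\to\infty$; a competitor can therefore jump between a ``cube value'' and a ``collar value'' whose difference is arbitrarily small, and your relative isoperimetric/coarea step no longer yields the lower bound $c\,\rho_n\ell_n^{N-1}$ per cube — the per-transition cost is proportional to the distance between the values actually taken, not to $\rho_n$. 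The same phenomenon defeats the two-sequence fix you sketch at the end: matching $L$-values only confines $u\cdot e$ to level sets of $L$, and for oscillatory $L$ the level sets at two different heights can be interleaved with vanishing gaps, so the ``forced oscillation'' can be bought at arbitrarily small total variation. As written, your argument proves the implication only under an additional structural hypothesis (monotone $L$, or boundedness of sublevel sets of $L$), which is strictly weaker than the statement.

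It is worth comparing with how the paper sidesteps all of this. There the counterexample $f$ depends on the single variable $x_1$: $f=(\widetilde f(x_1),0,\dots,0)\chi_Q$ with $\widetilde f=\sum_n t_n\chi_{I_n}$, $|I_n|=b_n|t_n|^{-p}$, on a cube $Q\subset D$, where the weights $a_{n}=L^p(t_n)/|t_n|^p$ and $b_n$ are chosen so that $\sum_n b_n<\infty$ (so $f\in L^p$) while $\sum_n a_nb_n=\infty$. Then for \emph{any} $u\in\sp$, one-dimensional slicing gives that for a.e.\ line parallel to $e_1$ the restriction of $u$ is a one-dimensional $BV$ function, hence bounded, hence $\int_0^1|L(u_{|_{x'}}\cdot e)|^p\dx<\infty$ using only continuity of $L$; on the other hand $\int_0^1|L(\widetilde f)|^p\,\dd s=\sum_n a_nb_n=+\infty$ on \emph{every} line, and Tonelli plus the elementary inequality $|a+b|^p\le 2^{p-1}(|a|^p+|b|^p)$ forces $\int_D|L(u\cdot e)-L(f\cdot e)|^p\dx=+\infty$. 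No monotonicity, no counting of cubes, no isoperimetric or coarea argument: the only property of $u$ exploited is boundedness of its one-dimensional slices, which neutralizes arbitrary oscillation of $L$. The structural reason your construction is vulnerable and the paper's is not: your set $\{f\ne 0\}$ is scattered, so $u$ can mimic $f$ locally at small TV cost, whereas in the slab construction every line through $Q$ carries infinite $L^p$ mass of $L(f\cdot e)$, which no bounded one-dimensional profile can follow.
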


\begin{proof}
\emph{Step 1.} We start by proving the implication $(ii)\Rightarrow(i)$.
Let
\[
L_\infty:=\limsup_{|t|\rightarrow+\infty}\frac{L(t)}{|t|}\,.
\]
Fix $\varepsilon>0$ and let $\bar{t}>0$ be such that
\begin{equation}\label{eq:ineql}
L(t)\leq (L_\infty+\varepsilon)|t|\,,
\end{equation}
for every $t\in\R$ with $|t|\geq|\bar{t}|$. Let $K:=\max\{ L(t) \,:\, |t|\leq|\bar{t}| \}$.
We have
\begin{align}\label{eq:stimaL}
\int_D |L(f\cdot e) |^p\dx &= 
    \int_{D\cap \{|f\cdot e|<\bar{t} \}} |L(f\cdot e) |^p\dx
    +\int_{D\cap \{|f\cdot e|\geq\bar{t} \}} |L(f\cdot e) |^p\dx \nonumber\\
&\leq |D| K^p + (L_\infty+\varepsilon)^p\|f\|^p_{L^p}\,,
\end{align}
where in the last step we used \eqref{eq:ineql}. Taking $u=0$ and using \eqref{eq:stimaL} we conclude that
\begin{align*}
\f(u)&=\lambda\int_{\o\,\meno \,D}|f|^p\dx 
    + \mu\int_D \bigl|\, L(f\cdot e)  \,\bigr|^p\dx\\
&\leq \lambda\|f\|^p_{L^p}
    +\mu(\,|D| K^p + (L_\infty+\varepsilon)\|f\|^p_{L^p}\,)\,,
\end{align*}
and so $\f(u)<\infty$.

\emph{Step 2.} We now prove that $(i)\Rightarrow(ii)$. Assume that $(ii)$ fails, \emph{i.e.}, there exists $\{t_n\}_{n\in\N}$ with $|t_n|\geq1$,
$|t_n|\rightarrow\infty$, such that
\begin{equation}\label{eq:an}
\lim_{n\rightarrow+\infty}a_n=+\infty\,,\quad\quad\text{where }\quad a_n:=\frac{L^p(t_n)}{|t_n|^p}\,.
\end{equation}
Choose $n_k\nearrow\infty$ such that $a_{n_k}\geq 4^k$ for every $k\in\N$, and define
\[
b_n:=
\left\{
\begin{array}{ll}
2^{-k} & n=n_k\,,\\
0 & \text{otherwise}\,.
\end{array}
\right.
\]
Then it holds that
\begin{equation}\label{eq:bn}
\sum_{n=1}^\infty b_n=1\,,
\end{equation}
and
\begin{equation}\label{eq:anbn}
\sum_{n=1}^\infty a_nb_n = \sum_{k=1}^\infty a_{n_k}b_{n_k} \geq \sum_{k=1}^\infty 2^k = \infty\,. 
\end{equation}
Without loss of generality, we can assume that $e=e_1$, where $(e_1,\dots,e_N)$ is the canonical basis of $\R^N$, and that, in view of the fact that $\textrm{int}(D)\neq\emptyset$, $Q:=(0,1)^N\subset D$.
By \eqref{eq:bn}, and because $|t_n|\geq1$, it is possible to choose non-overlapping intervals $I_n\subset(0,1)$ such that
\begin{equation}\label{eq:in}
|I_n|=\frac{b_n}{|t_n|^p}\,.
\end{equation}
Define the function $f:\o\rightarrow\R^M$ as
\[
f(x):=(\widetilde{f}(x_1),0,\dots,0)\chi_{Q}(x)\,,
\]
where $x=(x_1,\dots,x_N)$, and $\widetilde{f}:\R\rightarrow\R$ is given by
\[
\widetilde{f}(s):=\sum_{n=1}^\infty t_n\chi_{I_n}(s)\,.
\]
Using \eqref{eq:in} and\eqref{eq:bn} in this order, we get
\[
\int_\o |f|^p \dx = \int_0^1 |\widetilde{f}(s)|^p \dd s =\sum_{n=1}^\infty |I_n||t_n|^p
    =\sum_{n=1}^\infty b_n =1\,.
\]
Let $u$ be an arbitrary function in $\sp$. We claim that
\[
\int_D |L(u\cdot e)-L(f\cdot e)|^p\dx=\infty\,.
\]
Write
\[
Q=(0,1)\times (0,1)^{N-1} \,.
\]
It is well known that for $\mathcal{L}^{N-1}$-a.e. $x'\in(0,1)^{N-1}$, the function
$u_{|_{x'}}: (0,1)\rightarrow\R$ given by
\[
u_{|_{x'}}(x):= u(x,x')
\]
is of bounded variation (see \cite[Section 3.11]{AFP}), thus bounded, and hence
\begin{equation}\label{eq:finiteu}
\int_0^1 |L(e\cdot u_{|_{x'}})|^p \, \dx<+\infty\,.
\end{equation}
On the other hand, using \eqref{eq:an}, \eqref{eq:in} and \eqref{eq:anbn} in this order, we have that
\begin{equation}\label{eq:inftyL}
\int_0^1 |L(\widetilde{f})|^p \,\dd s = \sum_{n=1}^\infty a_n|t_n|^p|I_n|
    =\sum_{n=1}^\infty a_n b_n = \infty\,.
\end{equation}
Invoking Tonelli's theorem and the inequality
\begin{equation}\label{eq:ineqp}
|a+b|^p\leq 2^{p-1}(|a|^p+|b^p|)\,,
\end{equation}
we get that
\begin{align*}
\int_D |L(&f\cdot e)-L(u\cdot e)|^p \dx \geq \int_Q |L(f\cdot e)-L(u\cdot e)|^p \dx \\
&=\int_{(0,1)^{N-1}} \left[\, \int_0^1 |L(\widetilde{f})-L(u_{|_{x'}}\cdot e)|^p \,\dd s  \,\right] \dd x'\\
&\hspace{0.05cm}\geq \int_{(0,1)^{N-1}} \left[\, 2^{1-p}\int_0^1 |L(\widetilde{f})|^p\dx 
    - \int_0^1 |L(u_{|_{x'}}\cdot e)|^p\dx\,\right] \dd x'\\
&=+\infty\,,
\end{align*}
where in the last step we used \eqref{eq:finiteu} and \eqref{eq:inftyL}. This proves that the functional $\f$ relative to $f$ is such that
\[
\f(u)=+\infty\,,
\]
for any $u\in\sp$, therefore $(i)$ fails.
\end{proof}

In the case $N=1$ it is possible to obtain a sharper result (see Remark \ref{rem:trivial}). Notice that, for $N=1$, we have $e\in\{\pm1\}$. For simplicity, we will assume $e=1$, so that $f\cdot e$ reduces to $f$.

\begin{proposition}
Assume that $N=1$, and let $f\in L^p(\o)$. Then, the following are equivalent:
\begin{itemize}
\item[(i)] there exists $u\in \sp$ such that $\f(u)<+\infty$, where $\f$ is the functional relative to $f$,
\item[(ii)] it holds
\[
\int_{D}|L(f)|^p\dx<+\infty\,.
\]
\end{itemize}
\end{proposition}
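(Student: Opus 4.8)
The plan is to prove the two implications separately; one-dimensionality enters crucially only in the direction $(i)\Rightarrow(ii)$.

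For $(ii)\Rightarrow(i)$ I would simply test the functional with the constant function $u\equiv 0$. Since $|D0|(\o)=0$ and $f\in L^p(\o)$, both the total variation term and the fidelity term $\lambda\int_{\o\meno D}|f|^p\dx$ are finite, so everything reduces to controlling the distortion term $\mu\int_D|L(0)-L(f)|^p\dx$. Applying the elementary inequality \eqref{eq:ineqp} with $a=L(0)$, $b=-L(f)$ gives
\[
\int_D|L(0)-L(f)|^p\dx\leq 2^{p-1}\Bigl(\,|D|\,|L(0)|^p+\int_D|L(f)|^p\dx\,\Bigr)\,,
\]
which is finite by hypothesis $(ii)$. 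Hence $\f(0)<+\infty$, proving $(i)$.

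For $(i)\Rightarrow(ii)$, suppose $u\in\sp$ satisfies $\f(u)<+\infty$; since $\mu>0$, this forces in particular $\int_D|L(u)-L(f)|^p\dx<+\infty$. The key observation is that, because $\o\subset\R$ is a bounded connected open set, it is a bounded interval, and therefore every function of bounded variation on $\o$ is bounded --- this is the same one-dimensional fact already invoked at \eqref{eq:finiteu}. Setting $R:=\|u\|_{L^\infty(\o)}<+\infty$ and using the continuity of $L$, the constant $C_R:=\max\{\,|L(t)|:|t|\leq R\,\}$ is finite, so $|L(u)|\leq C_R$ a.e.\ and thus $\int_D|L(u)|^p\dx\leq C_R^p\,|D|<+\infty$. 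A further application of \eqref{eq:ineqp} then yields
\[
\int_D|L(f)|^p\dx\leq 2^{p-1}\Bigl(\,\int_D|L(u)|^p\dx+\int_D|L(u)-L(f)|^p\dx\,\Bigr)<+\infty\,,
\]
which is exactly $(ii)$.

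This argument presents no genuine obstacle; the only point deserving care is the boundedness of one-dimensional $BV$ functions, which is precisely what makes the sharper, data-dependent characterization possible. In dimension $N\geq 2$ this boundedness fails, and that is exactly why Theorem \ref{prop:nontri} must instead impose the growth condition \eqref{eq:linearinfty} on $L$ rather than a direct integrability condition on $L(f)$.
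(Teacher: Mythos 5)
Your proof is correct and follows essentially the same route as the paper: testing with $u\equiv 0$ for $(ii)\Rightarrow(i)$, and for $(i)\Rightarrow(ii)$ using the boundedness of one-dimensional $BV$ functions together with the continuity of $L$ and the inequality \eqref{eq:ineqp}. The only difference is that the paper phrases the second implication as a proof by contradiction, while you argue directly; the ingredients and estimates are identical.
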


\begin{proof}
The validity of the implication $(ii)\Rightarrow (i)$ can be seen by taking $u=0$.
To show $(i)\Rightarrow(ii)$, we recall that a function of bounded variation in one dimension is bounded.
Since  $\o\subset\R$ is bounded, we have that
\begin{equation}\label{eq:finiteu1}
\int_D |L(u)|^p\dx<+\infty
\end{equation}
for every $u\in BV(\o;\R^M)$. Assume that
\[
\int_{D}|L(f)|^p\dx=+\infty\,.
\]
In view of $(i)$ choose $u\in\sp$ such that $\f(u)<+\infty$, and note that, by \eqref{eq:finiteu1},
\begin{align*}
\f(u)&\geq \lambda\int_D |L(f)-L(u)|^p\dx\\
&\geq 2^{1-p}\int_D |L(f)|^p\dx - \int_D |L(u)|^p\dx=+\infty\,,
\end{align*}
and we reached a contradiction.
\end{proof}

\begin{remark}\label{rem:trivial}
In the case $N>1$, while the implication $(ii)\Rightarrow(i)$ is clearly valid, it turns out that $(i)\Rightarrow(ii)$ is false.
Indeed, consider the function
\[
f(x):=
\left\{
\begin{array}{ll}
\displaystyle\frac{e}{|x|^\alpha} & |x|\leq 1\,,\\
&\\
0 & \text{otherwise}\,,
\end{array}
\right.
\]
for $\alpha\in(0,N-1)$, so that $f\in\sp$. Now take $L(s):=s^{\frac{N}{\alpha p}}$, $\o:=B(0,2)$ and $D:=B(0,1)$. Then
\[
\int_D |L(f\cdot e)|^p\dx=\infty\,,
\]
while we clearly have $\f(f)<\infty$.
\end{remark}


\section{Piecewise constant admissible functions}\label{sec:pc}

In this section we study the minimization problem for the functional $\f$ in two particular cases restricting the admissible class of minimizers: when we fix apriori a finite number of admissible colors, and when we fix apriori the number of colors that we are allowed to use but the color spectrum is not restricted.
We start with the former case.

\begin{definition}
Fix $k\in\N$ and let $A:=\{a_1,\dots, a_k\}\subset\R^M$. We define
\[
\mathcal{A}_A:=\left\{\, u\in\sp \,:\, u(x)\in A \,\text{ for a.e. } x\in\o  \,\right\}\,.
\]
A function $u\in\mathcal{A}_A$ will be written as
\begin{equation}\label{eq:u}
u=\sum_{i=1}^k a_i\chi_{\o_i}\,,
\end{equation}
where $\o_i=\o_i(1)$ (the points of density $1$ for $\o_i$, see Definition \ref{def:ptdensity}), for every $i=1,\dots,k$.
\end{definition}

Consider the minimization problem
\begin{equation}\label{eq:minpbfinite}
\min_{u\in\mathcal{A}_A}\f(u)\,.
\end{equation}

\begin{theorem}\label{thm:kfixedcolors}
Let $\o\subset\R^N$ be an open connected set with Lipschitz boundary. Let $p\geq1$ and $f\in L^p(\o;\R^M)$ be such that $L(f\cdot e)\in L^p(\o)$.
Then, the minimization problem \eqref{eq:minpbfinite} admits a solution.
Assume, in addition, that, for some $q\geq N(p-1)$,
\begin{itemize}
\item[(H1)] $f\in L^{q}(\o;\R^M)$,
\item[(H2)] $L(f\cdot e)\in L^{q}(\o;\R^M)$,
\item[(H3)] $|a_i-a_j|<|a_i-a_k|+|a_k-a_j|$ whenever the indexes $i,j,k$ are different,
\end{itemize}
Let $u\in\mathcal{A}_A$ be a solution of \eqref{eq:minpbfinite}. Then,
\begin{itemize}
\item[(i)] $\o_i$ is open, for every $i=1,\dots,k$, and thus $J_u\cap\o=\overline{J_u}\cap\o$,
\item[(ii)] each $\partial^*\Omega_i$ is the union of relatively open sets of class $C^{1,\alpha}$, where $\alpha:=\frac{1}{2}\left(1-\frac{N(p-1)}{q}\right)$ and a closed singular set of $\hno$ measure zero,
\item[(iii)] there exists $\beta>0$, depending on $\o, D, N, \lambda, \mu, \|f\|_{L^{q}}, \|L(f\cdot e)\|_{L^{q}}$ and $\mathcal{A}$, such that
\[
\liminf_{\rho\to 0}\frac{\hno(J_u\cap B_\rho(x))}{\rho^{N-1}}\geq\beta\,,
\]
for every $x\in \overline{J_u}\cap\o$.
\end{itemize}
\end{theorem}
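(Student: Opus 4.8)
The plan is to attack Theorem~\ref{thm:kfixedcolors} in two phases, corresponding to existence and regularity. For \textbf{existence}, I would first rephrase the problem in the language of Caccioppoli partitions: a function $u\in\mathcal{A}_A$ is exactly a $k$-tuple of pairwise disjoint sets of finite perimeter $\{\o_1,\dots,\o_k\}$ covering $\o$ (up to null sets), and the first term of $\f$ in the form \eqref{eq:functf}, namely $\sum_{i<j}|a_i-a_j|\hno(\rb\o_i\cap\rb\o_j\cap\o)$, is a surface energy on the partition with the \emph{fixed} weights $c_{ij}:=|a_i-a_j|$. I would take a minimizing sequence $u_n=\sum_i a_i\chi_{\o_i^n}$. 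Since the $a_i$ are fixed and finite in number, control of $\f(u_n)$ gives a uniform bound on $|Du_n|(\o)$ and hence (the $\o_i^n$ being subsets of the fixed bounded set $\o$) compactness in $L^1$; extracting a subsequence, $\chi_{\o_i^n}\to\chi_{\o_i}$ in $L^1$ for each $i$, with $\{\o_i\}$ still a partition. Lower semicontinuity of the surface energy then follows from the Ambrosio--Braides theorem for functionals on partitions, provided the weights satisfy the triangle-type conditions $c_{ij}\le c_{ik}+c_{kj}$ and $c_{ij}>0$ for $i\ne j$ --- these are precisely subadditivity/BV-ellipticity conditions guaranteeing lower semicontinuity. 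The two fidelity terms pass to the liminf exactly as in Theorem~\ref{thm:exi}, via Fatou and the continuity of $L$ (after passing to a further a.e.-convergent subsequence). This yields a minimizer $u\in\mathcal{A}_A$.

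For the \textbf{regularity} claims (i)--(iii), the engine is the elimination lemma advertised as Theorem~\ref{thm:elimination}, which extends Leonardi's result \cite{Leo} to the weighted functional with the two fidelity terms, under hypotheses (H1)--(H3). The role of (H3), the strict triangle inequality on the $a_i$, is to guarantee the surface energy is genuinely ``elliptic'' so that interfaces cannot be split for free; the integrability assumptions (H1)--(H2) with $q\ge N(p-1)$ are what control the volume terms at the relevant scale. The strategy is the standard De~Giorgi--Carriero--Leaci / Congedo--Tamanini scheme adapted to partitions: one shows that at a point where one phase, say $\o_i$, has small density in a ball $B_r(x)$, that phase can be \emph{eliminated} from $B_{r/2}(x)$ at no increase in energy, exactly the mechanism of Lemma~\ref{lem:lemCT}. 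I would compare the candidate minimizer with the competitor obtained by reassigning the small phase to a neighbouring one inside the half-ball, estimating the surface-energy gain against the volume-term cost; the Sobolev/isoperimetric inequality turns the small-measure hypothesis into a decay that iterates to zero, forcing $|B_{r/2}(x)\cap\o_i|=0$.

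From the elimination lemma I extract the three conclusions in order. First, (i): eliminating low-density phases shows that locally the partition reduces to at most the phases with positive density, and standard density-estimate arguments then give that each $\o_i$ coincides with its set of density-$1$ points on an open set, so $\o_i$ is open and the jump set $J_u$ is essentially closed, i.e. $J_u\cap\o=\overline{J_u}\cap\o$. Second, (iii): the same density estimates, applied at points of $\overline{J_u}\cap\o$, produce the uniform lower bound $\liminf_{\rho\to0}\hno(J_u\cap B_\rho(x))/\rho^{N-1}\ge\beta$ with $\beta$ depending on the listed data. Third, (ii): once elimination guarantees that near $\hno$-a.e.\ interface point only two phases $\o_i,\o_j$ meet, the problem localizes to a two-phase, weight-$|a_i-a_j|$ almost-minimizer of perimeter with a volume term controlled in $L^q$; the exponent $q\ge N(p-1)$ is exactly what makes $\partial^*\o_i$ a quasi-minimizer of the perimeter with the Hölder deviation exponent $\alpha=\tfrac12\bigl(1-\tfrac{N(p-1)}{q}\bigr)$, whence De~Giorgi-type regularity theory gives $C^{1,\alpha}$ regularity of $\partial^*\o_i$ away from a closed $\hno$-null singular set.

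The main obstacle, and the genuinely new technical content, is establishing the weighted elimination lemma Theorem~\ref{thm:elimination} with both fidelity terms present. In the pure-perimeter setting of \cite{Leo} the competitor's energy change involves only the surface term, but here reassigning a phase inside $B_{r/2}(x)$ also perturbs $\lambda\int_{\o\setminus D}|u-f|^p$ and $\mu\int_D|L(u\cdot e)-L(f\cdot e)|^p$. Controlling these perturbations uniformly requires the higher integrability (H1)--(H2): one bounds the fidelity cost of the modified region by $\|f\|_{L^q}$ and $\|L(f\cdot e)\|_{L^q}$ via Hölder on the small set, and the scaling $q\ge N(p-1)$ is exactly what makes this cost a lower-order term relative to the perimeter gain $r^{N-1}$, so that the iteration closes. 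Getting these scalings to balance --- and verifying that the strict triangle inequality (H3) prevents the surface energy from degenerating when phases are merged --- is the delicate heart of the argument.
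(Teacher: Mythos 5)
Your proposal is correct and follows essentially the same route as the paper: recasting \eqref{eq:minpbfinite} as a minimization over $k$-finite Caccioppoli partitions with weights $|a_i-a_j|$ and using the Ambrosio--Braides lower semicontinuity theorem plus Fatou for existence, then deriving (i)--(iii) from a weighted elimination theorem extending Leonardi's result, with (H1)--(H2) and H\"older's inequality making the fidelity terms a perturbation of order $r^{N-N(p-1)/q}=r^{N-1+2\alpha}$ so that Tamanini-type almost-minimality regularity applies. Even the finer points match the paper's proof: the identification of the exponent $\alpha=\frac{1}{2}\bigl(1-\frac{N(p-1)}{q}\bigr)$, the role of (H3) in preventing degenerate splitting of interfaces (the paper's Lemma \ref{lem:mincut}), and the use of density/isoperimetric estimates (the paper's Lemma \ref{lem:genisop}) to obtain the lower density bound (iii) at points of $\overline{J_u}\cap\o$.
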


\begin{remark}
Notice that condition $(H2)$ is automatically satisfied if $(H1)$ and \eqref{eq:linearinfty} hold.

The hypothesis $(H3)$ requires triples of $a_i$'s to not be aligned, and is believed to be necessary for having regularity (see \cite{Leo}). From the technical point of view, it is needed in order to prove Lemma \ref{lem:mincut}.

Notice that hypotheses $(H1), (H2)$ and $(H3)$ are not needed for obtaining existence of a solution to the minimization problem \eqref{eq:minpbfinite} (see Proposition \ref{prop:existence}).
\end{remark}

The general strategy we adopt to prove Theorem \ref{thm:kfixedcolors} is similar to the one used by Tamanini and Congedo in \cite{TamCon}.
The idea is to recast the minimization problem \eqref{eq:minpbfinite} in the setting of Caccioppoli partitions.

\begin{definition}\label{def:Cacc}
A \emph{$k$-finite Caccioppoli partition} of $\o$ is a finite collection $\mathcal{U}=(U_1,\dots,U_k)$
of measurable subsets of $\o$ satisfying the following properties:
\begin{itemize}
\item[(i)] each $U_i$ has finite perimeter in $\o$,
\item[(ii)] $U_i=U_i(1)$,
\item[(iii)] $|U_i\cap U_j|=0$ if $i\neq j$,
\item[(iv)] $\left|\, \o\meno\cup_{i=1}^k U_i \,\right|=0$.
\end{itemize}
Define the \emph{perimeter} of the partition $\mathcal{U}$ in $\o$ as
\[
\mathrm{Per}(\mathcal{U};\o):=\hno\left(\, \bigcup_{i=1}^k \rb U_i\cap\o  \,\right)\,.
\]
Denote by $\mathcal{C}_k(\o)$ the family of all $k$-finite Caccioppoli partitions of $\o$.
\end{definition}

\begin{remark}
Notice that condition $(ii)$ of Definition \ref{def:Cacc} is imposed to guarantee a well defined representative of each measurable set in the partition.
Indeed, it holds that $A(1)=B(1)$ for every sets $A,B\subset\R^N$ with $|A \triangle B|=0$,
where $A\triangle B:=(A\setminus B)\cup (B\setminus A)$.
Moreover, conditions $(iii)$ and $(iv)$ assert that the $U_i$'s are pairwise disjoint and cover $\o$, in a measure theoretical sense.
\end{remark}

We recall the following structure theorem for $k$-finite Caccioppoli partitions (for a proof, see \cite{ConTam}).

\begin{theorem}\label{thm:struct}
Let $\mathcal{U}=(U_1,\dots,U_k)$ be a $k$-finite Caccioppoli partition of $\o$. Then
\[
\hno\left[\, \o\setminus \left(\, \bigcup_{i=1}^k U_i \,\cup\,
    \bigcup_{i<j=1}^k \left[ U_i\left(1/2\right)\cap U_j\left(1/2\right) \right]  \,\right)  \,\right]=0\,,
\]
where we recall that $U_i(1/2)$ denotes the sets of points of density $1/2$ of $U_i$ (see Definition \ref{def:ptdensity}).
In particular, the perimeter of the interfaces is given by
\begin{equation}\label{eq:perinter}
\mathrm{Per}(\mathcal{U};\o)=\frac{1}{2}\sum_{i=1}^k \per(U_i\,;\,\o)
    =\frac{1}{2}\sum_{i=1}^k \sum_{j\neq i}\hno(\rb U_i\cap \rb U_j\cap \o)\,.
\end{equation}
\end{theorem}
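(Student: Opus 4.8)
The plan is to reduce the statement to Federer's theorem applied to each of the finitely many sets $U_i$, combined with the partition constraint $\sum_{i=1}^k\chi_{U_i}=1$ almost everywhere. First I would recall that, by the structure theorem of De Giorgi quoted above, each $U_i$ satisfies $|D\chi_{U_i}|=\hno\restr(\rb U_i\cap\o)$, and that by Federer's theorem $\rb U_i$ coincides, up to an $\hno$-null set, with the set $U_i(1/2)$ of points of density $1/2$, while $\hno$-a.e. point of $\o$ lies in $U_i(0)\cup U_i(1/2)\cup U_i(1)$. Since there are only $k$ sets, removing the finite union of these $\hno$-null exceptional sets, I may assume that for $\hno$-a.e. $x\in\o$ the density $\theta_i(x):=\lim_{r\to0}|U_i\cap B_r(x)|/(\omega_N r^N)$ exists and belongs to $\{0,\tfrac12,1\}$ for every $i=1,\dots,k$.

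The key combinatorial step uses the partition property. Because the $U_i$ are pairwise disjoint and cover $\o$ up to a Lebesgue-null set, and since $\o$ is open (so $B_r(x)\subset\o$ for $x\in\o$ and $r$ small), we have $\sum_{i=1}^k|U_i\cap B_r(x)|=|B_r(x)|=\omega_N r^N$. Dividing by $\omega_N r^N$ and letting $r\to0$ gives $\sum_{i=1}^k\theta_i(x)=1$ at every point where all densities exist, hence for $\hno$-a.e. $x\in\o$. The only way to write $1$ as a sum of numbers in $\{0,\tfrac12,1\}$ is either as a single $1$ with all other terms zero, or as exactly two halves with all other terms zero. This dichotomy says precisely that, up to an $\hno$-null set, every $x\in\o$ lies either in exactly one $U_i(1)=U_i$ or in exactly one intersection $U_i(1/2)\cap U_j(1/2)$ with $i<j$, which is the first assertion.

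For the perimeter identity I would exploit the same dichotomy. Using that $\rb U_i$ and $U_i(1/2)$ differ by an $\hno$-null set, the dichotomy shows that for $\hno$-a.e. $x\in\rb U_i\cap\o$ there is exactly one index $j\neq i$ with $x\in\rb U_j$; hence $\rb U_i\cap\o$ equals $\bigcup_{j\neq i}(\rb U_i\cap\rb U_j\cap\o)$ up to an $\hno$-null set, and the sets on the right are pairwise $\hno$-disjoint, since no point can lie on three reduced boundaries. Taking $\hno$ measures and summing over $i$ gives
\[
\sum_{i=1}^k\per(U_i;\o)=\sum_{i=1}^k\hno(\rb U_i\cap\o)=\sum_{i=1}^k\sum_{j\neq i}\hno(\rb U_i\cap\rb U_j\cap\o),
\]
while the same disjointness yields $\mathrm{Per}(\mathcal{U};\o)=\hno\bigl(\bigcup_i\rb U_i\cap\o\bigr)=\sum_{i<j}\hno(\rb U_i\cap\rb U_j\cap\o)$. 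Comparing the two, and noting that each unordered pair is counted twice in the double sum, produces the factor $\tfrac12$ and closes the proof.

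I expect the main obstacle to be the careful bookkeeping of $\hno$-null sets: one must ensure that, after discarding finitely many exceptional sets, the density trichotomy holds simultaneously for all $k$ indices, and that the identification of $\rb U_i$ with $U_i(1/2)$ together with the ``counted twice'' argument is invoked only on the set where the dichotomy is valid. Federer's theorem does the heavy lifting here; the remaining content is elementary measure theory together with the finite combinatorics of expressing $1$ as a sum of zeros, halves, and ones.
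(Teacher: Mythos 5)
Your proof is correct. Note that the paper itself contains no proof of Theorem \ref{thm:struct}: it is quoted as a known result, with the proof deferred to Congedo--Tamanini \cite{ConTam}, so there is no internal argument to compare against. Your route --- Federer's theorem for each of the finitely many $U_i$, then the observation that the densities $\theta_i(x)\in\{0,\tfrac12,1\}$ must sum to $1$, so that $\hno$-a.e.\ point of $\o$ is either a density-one point of a single $U_i$ or a common density-$\tfrac12$ point of exactly two of them --- is the standard proof of this statement (it is essentially the argument for the analogous result on countable Caccioppoli partitions, cf.\ Ambrosio--Fusco--Pallara, Theorem 4.17), and every step you give is sound: the identity $\sum_i|U_i\cap B_r(x)|=|B_r(x)|$ is exactly conditions (iii) and (iv) of Definition \ref{def:Cacc}; the identification $U_i(1)=U_i$ is condition (ii); and the pairwise $\hno$-disjointness of the interfaces $\rb U_i\cap\rb U_j$ (no point can carry three densities $\tfrac12$) is what justifies both the additivity $\per(U_i;\o)=\sum_{j\neq i}\hno(\rb U_i\cap\rb U_j\cap\o)$ and the double-counting factor $\tfrac12$ in \eqref{eq:perinter}. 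One point worth making explicit in a final write-up: the finiteness of the partition is what lets you pass to the limit in $\sum_i|U_i\cap B_r(x)|/(\omega_N r^N)$ term by term and discard only finitely many $\hno$-null exceptional sets; in the countable setting treated in \cite{ConTam} this interchange needs an additional argument, so your proof is genuinely simpler precisely because the statement here is restricted to $k$-finite partitions.
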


In order to recast our minimization problem in terms of Caccioppoli partitions, observe that for $u\in\mathcal{A}_A$ it holds
\begin{align*}
\f(u)&=\sum_{i=1}^k\sum_{j>i} |a_i-a_j|\hno(\rb\o_i\cap\rb\o_j\cap\o) + \lambda\sum_{i=1}^k \int_{\o_i\,\meno\, D} |a_i-f|^p\dx\\
&\hspace{0.5cm}+ \mu\sum_{i=1}^k \int_{\o_i\cap D} |L(a_i\cdot e)-L(f\cdot e)|^p\dx\,.
\end{align*} 

We are led to the following definition.

\begin{definition}
Let $\lambda,\mu>0$, $f\in L^p(\o;\,\R^M)$ and $L:\R\to\R$ be a continuous function.
We define the functional $\g:\mathcal{C}_k\rightarrow[0,\infty]$ as
\begin{align*}
\g(\mathcal{U})&:=\sum_{i=1}^k\sum_{j>i} \alpha_{ij}\hno(\rb U_i\cap\rb U_j\cap\o)
    + \lambda\sum_{i=1}^k \int_{U_i\,\meno\, D} |a_i-f|^p\dx\\
&\hspace{1cm}+ \mu\sum_{i=1}^k \int_{U_i\cap D} |L(a_i\cdot e)-L(f\cdot e)|^p\dx\,.
\end{align*}
where $\alpha_{ij}:=|a_i-a_j|>0$, for $i\in\{1,\dots,k\}$ and $j>i$, and $\mathcal{U}=(U_1,\dots,U_k)$.
Moreover, for $\mathcal{U}\in\mathcal{C}_k(\o)$ and a Borel set $A\subset\o$ we define the weighted perimeter of the partition $\mathcal{U}$ in $A$ as
\begin{equation}\label{eq:ptilde}
\mathrm{WPer}(\mathcal{U}\,;A):=\sum_{i=1}^k\sum_{j>i} \alpha_{ij}\hno(\rb U_i\cap\rb U_j\cap A)\,.
\end{equation}
\end{definition}

We then consider the minimization problem
\begin{equation}\label{eq:minpbCacc}
\min_{\mathcal{U}\in\mathcal{C}_k}\g(\mathcal{U})\,.
\end{equation}

The  equivalence between the minimization problems \eqref{eq:minpbfinite} and \eqref{eq:minpbCacc} now follows.

\begin{lemma}\label{lem:equiv}
Let $u\in\mathcal{A}_A$ be a solution to the minimization problem \eqref{eq:minpbfinite}.
Then $\mathcal{O}:=(\o_1,\dots,\o_k)\in\mathcal{C}_k$ is a solution to the minimization problem \eqref{eq:minpbCacc}, where $\o_1,\dots,\o_k$ are given by \eqref{eq:u}.

Conversely, if $\mathcal{U}:=(U_1,\dots,U_k)\in\mathcal{C}_k$ is a solution to the minimization problem \eqref{eq:minpbCacc}, then the function
\[
u:=\sum_{i=1}^k a_i\chi_{U_i}
\]
belongs to $\mathcal{A}_A$ and is a solution to the minimization problem \eqref{eq:minpbfinite}.
\end{lemma}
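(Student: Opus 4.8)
The plan is to exhibit an explicit bijection between the admissible class $\mathcal{A}_A$ and the family of Caccioppoli partitions $\mathcal{C}_k$ under which the two functionals $\f$ and $\g$ coincide, so that minimizers are transported back and forth. Define $\Phi:\mathcal{A}_A\to\mathcal{C}_k$ by $\Phi(u):=(\o_1,\dots,\o_k)$, where $\o_i=\{u=a_i\}(1)$ is the set of density-$1$ points, and $\Psi:\mathcal{C}_k\to\mathcal{A}_A$ by $\Psi(\mathcal{U}):=\sum_{i=1}^k a_i\chi_{U_i}$. The core of the argument is to check that both maps are well defined, that they are mutually inverse once the density-$1$ representative is fixed, and that $\f=\g\circ\Phi$ and $\g=\f\circ\Psi$.

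For $\Phi$: given $u\in\mathcal{A}_A$, each $\o_i$ has finite perimeter in $\o$. Indeed, since $A$ is a finite set I can pick a Lipschitz map $g_i:\R^M\to\R$ with $g_i(a_i)=1$ and $g_i(a_j)=0$ for $j\neq i$; then $\chi_{\o_i}=g_i\circ u$ a.e., and the chain rule for $BV$ functions gives $\chi_{\o_i}\in BV(\o)$, i.e. $\o_i$ has finite perimeter. Properties (ii)--(iv) of Definition \ref{def:Cacc} follow at once from the facts that $u$ is single-valued and $A$-valued a.e. and from the normalization $\o_i=\o_i(1)$; hence $\Phi(u)\in\mathcal{C}_k$. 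For $\Psi$: given $\mathcal{U}\in\mathcal{C}_k$, each $\chi_{U_i}\in BV(\o)$ because $U_i$ has finite perimeter, so $u:=\sum_i a_i\chi_{U_i}$ is a finite sum of $\sp$ functions and thus lies in $\sp$; since the $U_i$ are essentially disjoint and cover $\o$, $u$ takes values in $A$ a.e., so $u\in\mathcal{A}_A$. That $\Phi$ and $\Psi$ are inverse to one another follows from $U_i=U_i(1)$ and $\o_i=\o_i(1)$ together with the remark that the density-$1$ representative is insensitive to modifications on null sets.

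The equality of the functionals is exactly the identity recorded just before the definition of $\g$: the volume terms of $\f$ and $\g$ are literally the same once $\o_i$ (resp. $U_i$) are the regions of the partition, while for the total-variation term the structure theorem for Caccioppoli partitions (Theorem \ref{thm:struct}) identifies the jump set of $u=\sum_i a_i\chi_{\o_i}$, up to an $\hno$-null set, with $\bigcup_{i<j}(\rb\o_i\cap\rb\o_j\cap\o)$, on which the jump of $u$ has modulus $|a_i-a_j|$; integrating $|u^+-u^-|$ over $J_u$ therefore yields $|Du|(\o)=\sum_{i<j}|a_i-a_j|\hno(\rb\o_i\cap\rb\o_j\cap\o)$, which is the weighted-perimeter term of $\g$. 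Thus $\f(u)=\g(\Phi(u))$ for every $u\in\mathcal{A}_A$ and, symmetrically, $\g(\mathcal{U})=\f(\Psi(\mathcal{U}))$ for every $\mathcal{U}\in\mathcal{C}_k$.

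The conclusion is then immediate. If $u$ solves \eqref{eq:minpbfinite}, then for an arbitrary competitor $\mathcal{V}\in\mathcal{C}_k$ we set $v:=\Psi(\mathcal{V})\in\mathcal{A}_A$ and obtain $\g(\mathcal{V})=\f(v)\geq\f(u)=\g(\Phi(u))$, so $\Phi(u)=\mathcal{O}$ minimizes $\g$; the converse is obtained by the same argument with the roles of $\Phi$ and $\Psi$ exchanged. The only genuinely delicate point is the bookkeeping of representatives needed to make $\Phi$ and $\Psi$ exact inverses (rather than inverses only up to null sets)---this is precisely why condition (ii) in Definition \ref{def:Cacc} and the normalization $\o_i=\o_i(1)$ were imposed---but it is resolved entirely by the cited remark on density-$1$ points.
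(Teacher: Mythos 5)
Your proof is correct and takes essentially the same approach as the paper: the paper states Lemma \ref{lem:equiv} without a written proof, relying precisely on the observation recorded just before the definition of $\g$ that $\f(u)=\g(\mathcal{U})$ when $u=\sum_{i=1}^k a_i\chi_{\o_i}$, so that minimizers transport back and forth. Your write-up merely makes explicit the bookkeeping the paper leaves implicit---finite perimeter of the level sets via composition with Lipschitz cut-off functions, the density-$1$ normalization making $\Phi$ and $\Psi$ exact inverses, and the identification $|Du|(\o)=\sum_{i<j}|a_i-a_j|\hno(\rb\o_i\cap\rb\o_j\cap\o)$ via the structure theorem---all of which is sound.
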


We now focus on the study of the minimization problem \eqref{eq:minpbCacc}.

\begin{proposition}\label{prop:existence}
Assume $(H3)$ of Theorem \ref{thm:kfixedcolors} holds true. Then the minimization problem \eqref{eq:minpbCacc} admits a solution.
\end{proposition}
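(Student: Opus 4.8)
The plan is to establish existence of a minimizer for $\g$ over $\mathcal{C}_k(\o)$ via the Direct Method of the Calculus of Variations, with the key nontrivial ingredient being lower semicontinuity of the weighted perimeter term. First I would fix a minimizing sequence $\{\mathcal{U}^{(n)}\}_{n\in\N}\subset\mathcal{C}_k(\o)$, where $\mathcal{U}^{(n)}=(U_1^{(n)},\dots,U_k^{(n)})$, and assume without loss of generality that $\Lambda:=\lim_{n\to\infty}\g(\mathcal{U}^{(n)})<+\infty$; such a sequence exists because, for instance, taking $U_1=\o$ and $U_i=\emptyset$ for $i\geq2$ produces a partition with finite energy, using that $f\in L^p$ and $L(f\cdot e)\in L^p$. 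Since each $\alpha_{ij}=|a_i-a_j|>0$, the finiteness of the weighted perimeter forces $\sup_n \per(U_i^{(n)};\o)<+\infty$ for each fixed $i$: indeed $\mathrm{WPer}(\mathcal{U}^{(n)};\o)\geq c\,\mathrm{Per}(\mathcal{U}^{(n)};\o)$ with $c:=\min_{i<j}\alpha_{ij}>0$, and by \eqref{eq:perinter} the total perimeter controls each individual $\per(U_i;\o)$. Thus $\{\chi_{U_i^{(n)}}\}_n$ is bounded in $BV(\o)$ for each $i$.

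Next I would extract, by the compactness theorem for $BV$ (using that $\o$ is bounded with Lipschitz boundary, hence the embedding $BV(\o)\hookrightarrow L^1(\o)$ is compact), a subsequence (not relabeled) and sets $U_1,\dots,U_k$ of finite perimeter such that $\chi_{U_i^{(n)}}\to\chi_{U_i}$ in $L^1(\o)$ and pointwise a.e.\ for every $i=1,\dots,k$. I would then check that $\mathcal{U}:=(U_1,\dots,U_k)$, after passing to the precise representatives $U_i(1)$, is a genuine element of $\mathcal{C}_k(\o)$: properties $(i)$ and $(ii)$ of Definition \ref{def:Cacc} are immediate from finite perimeter and the convention $U_i=U_i(1)$, while $(iii)$ and $(iv)$ pass to the limit because $L^1$-convergence of characteristic functions preserves the constraints $\sum_i\chi_{U_i^{(n)}}=1$ a.e.\ and $\chi_{U_i^{(n)}}\chi_{U_j^{(n)}}=0$ a.e.\ (the a.e.\ pointwise limit of indicators summing to one still sums to one).

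For the lower semicontinuity of $\g$ I would treat the three terms separately. The volume fidelity terms $\lambda\int_{U_i\meno D}|a_i-f|^p\dx$ and $\mu\int_{U_i\cap D}|L(a_i\cdot e)-L(f\cdot e)|^p\dx$ pass to the limit by dominated or Fatou-type arguments, since $\chi_{U_i^{(n)}}\to\chi_{U_i}$ pointwise a.e.\ and the integrands are fixed nonnegative $L^1(\o)$ functions (using $f\in L^p$ and $L(f\cdot e)\in L^p$); in fact these terms converge, not merely lower-semicontinuously. The main obstacle is the weighted perimeter term, which is a genuine surface energy on a partition and is \emph{not} simply the sum of lower-semicontinuous individual perimeters—the coefficients $\alpha_{ij}$ couple the interfaces. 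This is precisely where hypothesis $(H3)$ enters: the triangle-inequality condition $\alpha_{ij}<\alpha_{ik}+\alpha_{kj}$ for distinct $i,j,k$ guarantees that the surface tension matrix $(\alpha_{ij})$ defines an admissible (subadditive, hence lower-semicontinuous) interfacial energy on partitions.

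To dispatch this obstacle I would invoke the lower semicontinuity theorem of Ambrosio and Braides for functionals defined on partitions (cited earlier in the excerpt as \cite{AmbBra2}), which asserts that under the subadditivity condition guaranteed by $(H3)$, the functional $\mathcal{U}\mapsto\sum_{i<j}\alpha_{ij}\hno(\rb U_i\cap\rb U_j\cap\o)$ is lower-semicontinuous with respect to $L^1$-convergence of the partition. Applying this to our convergent subsequence yields
\[
\mathrm{WPer}(\mathcal{U};\o)\leq\liminf_{n\to\infty}\mathrm{WPer}(\mathcal{U}^{(n)};\o)\,.
\]
Combining this with the convergence of the two fidelity terms gives $\g(\mathcal{U})\leq\liminf_{n\to\infty}\g(\mathcal{U}^{(n)})=\Lambda=\inf_{\mathcal{C}_k}\g$, so that $\mathcal{U}$ is the desired minimizer. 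I expect the only subtle points to be the verification that the $L^1$-limit is a bona fide Caccioppoli partition (handled above) and the correct citation and hypothesis-matching for the Ambrosio--Braides result—in particular confirming that $(H3)$ is exactly the subadditivity requirement in their framework.
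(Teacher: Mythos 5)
Your proposal is correct and follows essentially the same route as the paper: bound each $\per(U_i^{(n)};\o)$ using $\min_{i<j}\alpha_{ij}>0$, extract an $L^1$-convergent subsequence by $BV$ compactness, pass to the representatives $U_i(1)$ to land back in $\mathcal{C}_k(\o)$, apply the Ambrosio--Braides lower semicontinuity theorem to the weighted perimeter, and handle the fidelity terms by dominated convergence and Fatou. One small correction: $(H3)$ is not ``exactly'' the subadditivity requirement for Ambrosio--Braides --- the non-strict triangle inequality $\alpha_{ij}\leq\alpha_{ik}+\alpha_{kj}$ holds automatically because $\alpha_{ij}=|a_i-a_j|$ are Euclidean distances (indeed the paper's own remark notes that $(H3)$ is not needed for existence; its strictness is used only later, in Lemma \ref{lem:mincut} for regularity).
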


\begin{proof}
Let $\{\mathcal{U}_n\}_{n\in\N}\subset\mathcal{C}_k$ be a minimizing sequence, where $\mathcal{U}_n:=(U^n_1,\dots,U^n_k)$.
Without loss of generality, we can assume that $\sup_{n\in\N}\g(\mathcal{U}_n)<\infty$.
Since, for every $i=1,\dots,k$ and every $n\in\N$ it holds that
\[
\per(U^n_i\,;\o)=\hno(\rb U^n_i\cap \o)= \sum_{j\neq i} \hno(\rb U^n_j \cap \rb U^n_i \cap \o)\,,
\]
using the fact that $\alpha_{ij}>C$ for some $C>0$, and that
\begin{equation}\label{eq:equper}
\sum_{i=1}^k\sum_{j>i} \hno(\rb U^n_i\cap\rb U^n_j\cap\o)=\hno\left(\, \bigcup_{i=1}^k \rb U^n_i \cap \o \,\right)\,,
\end{equation}
we get $\sup_n\per(U^n_i\,;\,\o)<\infty$ for all $i=1,\dots,k$.
A diagonalization argument yields a subsequence (not relabeled), verifying
\begin{equation}\label{eq:lscp}
\chi_{U^n_i}\to \chi_{U_i} \,\,\text{ in } L^1\,,\quad\quad \per(U_i\,;\,\o)\leq \liminf_{n\to\infty}\per(U^n_i\,;\,\o)\,,
\end{equation}
for some sets $U_1,\dots,U_k$ of finite perimeter in $\o$.
By replacing, if needed, each $U_i$ with $U_i(1)$, it is easy to see that $\mathcal{U}=(U_1,\dots,U_k)\in\mathcal{C}_k$.
Using the lower semi-continuity result by Ambrosio and Braides (see \cite[Theorem 2.1]{AmbBra2}), we obtain 
\begin{align*}
\mathrm{WPer}(\mathcal{U}\,;\,\o)\leq\liminf_{n\to\infty}\,\,\mathrm{WPer}(\mathcal{U}_n\,;\,\o)\,.
\end{align*}
Finally, by Lebesgue's dominated convergence theorem, we deduce that
\[
\int_{U^n_i\,\meno\, D} |a_i-f|^p\dx\to\int_{U_i\,\meno\, D} |a_i-f|^p\dx\,,
\]
as $n\to\infty$, and, by Fatou's lemma, that
\[
\int_{U_i\cap D} |L(a_i\cdot e)-L(f\cdot e)|^p\dx\leq\liminf_{n\to\infty}\int_{U^n_i\cap D} |L(a_i\cdot e)-L(f\cdot e)|^p\dx\,,
\]
for every $i=1,\dots,k$.
\end{proof}

The following elimination theorem is the fundamental tool we will use to establish regularity properties of solutions to the minimization problem \eqref{eq:minpbCacc}. 
Our result extends the one proved by Leonardi in \cite{Leo} for the functional $\mathcal{G}$ with $\lambda=\mu=0$.

\begin{theorem}\label{thm:elimination}
Let $\mathcal{U}=(U_1,\dots,U_k)\in\mathcal{C}_k$ be a solution to the minimization problem \eqref{eq:minpbCacc}, and assume that hypotheses $(H1)$, $(H2)$ and $(H3)$ of Theorem \ref{thm:kfixedcolors} hold.
Set $V:=U_3\cup\dots \cup U_k$.
Then there exist $\eta,r_0>0$ such that if $x\in\o$, $B_{r_0}(x)\subset\subset\o$, and if
\begin{equation}\label{eq:smallness}
|V\cap B_r(x)|\leq\eta r^N
\end{equation}
for some $0<r<r_0$, then $|V\cap B_{r/2}(x)|=0$.
\end{theorem}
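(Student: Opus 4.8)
The plan is to run an elimination argument of De~Giorgi--Congedo--Tamanini type (as in \cite{TamCon} and \cite{Leo}): compare the minimizer $\mathcal{U}$ with a competitor obtained by erasing the minority phases $U_3,\dots,U_k$ inside a ball $B_r(x)$ and redistributing their mass to the two majority phases $U_1,U_2$. Fix $x$ with $B_{r_0}(x)\subset\subset\o$ and set $m(r):=|V\cap B_r(x)|$. Since $m$ is non-decreasing it is differentiable for a.e.\ $r$, and by the coarea formula $m(r)=\int_0^r\hno(V\cap\partial B_s(x))\,\dd s$, so that $m'(r)=\hno(V\cap\partial B_r(x))$ for a.e.\ $r\in(0,r_0)$. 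I would work only with such ``good'' radii, for which $\partial B_r(x)$ meets the reduced boundaries $\rb U_i$ in an $\hno$-negligible set and the slicing and trace computations are legitimate.

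For a good radius $r$, define $\mathcal{U}'\in\mathcal{C}_k$ by leaving $\mathcal{U}$ untouched on $\o\meno B_r(x)$ and, inside $B_r(x)$, reassigning $V\cap B_r(x)$ to $U_1$ and $U_2$ according to the optimal redistribution furnished by Lemma~\ref{lem:mincut}; thus $\mathcal{U}'$ contains no points of $U_3,\dots,U_k$ in $B_r(x)$. Minimality gives $\g(\mathcal{U})\le\g(\mathcal{U}')$, which I rearrange as (weighted perimeter erased) $\le$ (weighted perimeter created) $+$ (bulk cost), and then estimate the three contributions separately. For the \emph{perimeter balance}, the interfaces of $V$ inside $B_r(x)$ are removed, yielding a gain of at least $\alpha_{\min}\,\per(V;B_r(x))$ with $\alpha_{\min}:=\min_{i<j}\alpha_{ij}>0$, while the redistribution creates new interface only on $\partial B_r(x)$, of weighted area at most $\alpha_{\max}\,\hno(V\cap\partial B_r(x))=\alpha_{\max}\,m'(r)$, together with a $U_1$--$U_2$ interface along the cut surface inside $B_r(x)$. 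Here the strict triangle inequality $(H3)$ is decisive: through Lemma~\ref{lem:mincut} it guarantees that producing this $U_1$--$U_2$ interface is strictly cheaper than the $V$-interface it replaces, so that
\[
\mathrm{WPer}(\mathcal{U};\o)-\mathrm{WPer}(\mathcal{U}';\o)\;\ge\;\delta\,\per(V;B_r(x))-C\,m'(r)
\]
for some $\delta>0$ depending only on the weights $\alpha_{ij}$.

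For the \emph{bulk cost}, reassigning $V\cap B_r(x)$ changes the $\lambda$- and $\mu$-terms of $\g$ by at most
\[
C\int_{V\cap B_r(x)}\!\bigl(|f|^{p-1}+1\bigr)\dx+C\int_{V\cap B_r(x)}\!\bigl(|L(f\cdot e)|^{p-1}+1\bigr)\dx,
\]
using $\bigl||a-f|^p-|b-f|^p\bigr|\le C(|f|^{p-1}+1)$ and the boundedness of $A$. By H\"older's inequality with exponent $q/(p-1)$ together with $(H1)$ and $(H2)$, each integral is bounded by
\[
\bigl(\|f\|_{L^q(B_r)}^{p-1}+\|L(f\cdot e)\|_{L^q(B_r)}^{p-1}\bigr)\,m(r)^{1-\frac{p-1}{q}}+C\,m(r),
\]
and since $q\ge N(p-1)$ we have $1-\frac{p-1}{q}\ge\frac{N-1}{N}$; moreover, by absolute continuity of the integral, the coefficients $\|f\|_{L^q(B_r)}$ and $\|L(f\cdot e)\|_{L^q(B_r)}$ can be made arbitrarily small by shrinking $r_0$. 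Combining this with the perimeter balance and the relative isoperimetric inequality $m(r)^{\frac{N-1}{N}}\le C_N\,\per(V;B_r(x))$ (whose minority alternative is the correct one since $m(r)\le\eta r^N\ll|B_r(x)|$), I obtain, for a.e.\ good $r<r_0$ with $m(r)\le\eta r^N$, the differential inequality
\[
m(r)^{\frac{N-1}{N}}\;\le\;C\,m'(r),
\]
the lower-order bulk terms having been absorbed into the left-hand side once $r_0$ and $\eta$ are small enough. A standard integration of this inequality on $(r/2,r)$, comparing $m^{1/N}$ with a linear function and invoking the smallness $m(r)\le\eta r^N$, forces $m(r/2)=0$ provided $\eta$ is chosen below an explicit threshold, which is the assertion.

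The main obstacle is the perimeter-balance step: controlling the $U_1$--$U_2$ interface produced by the redistribution and extracting a strictly positive gain proportional to $\per(V;B_r(x))$. This is precisely where $(H3)$ is indispensable---without it a thin layer of a third phase wedged between $U_1$ and $U_2$ could be perimeter-neutral and elimination would fail---and where Lemma~\ref{lem:mincut} carries the technical weight. By contrast, the bulk estimate is routine once the exponent relation $q\ge N(p-1)$ and the absolute continuity of the integral are in hand, and the concluding ODE argument is classical.
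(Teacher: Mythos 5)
Your proposal is correct and follows essentially the same route as the paper's proof: the same competitor built from the index splitting of Lemma \ref{lem:mincut} (where $(H3)$ enters), the same decomposition into a perimeter gain $\geq C_1\,\per(V;B_r(x))$ plus a spherical term of order $m'(r)$, the same H\"older/absolute-continuity estimate of the bulk terms under $(H1)$--$(H2)$ yielding a coefficient vanishing as $r_0\to0$, and the same isoperimetric-plus-ODE conclusion. The only (harmless) deviations are cosmetic: you invoke the relative isoperimetric inequality in $B_r(x)$ on the minority side, while the paper uses the global one, and you apply H\"older directly with exponent $q/(p-1)$ rather than through the $L^{N(p-1)}$ norm.
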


\begin{remark}\label{rem:perm}
Theorem \ref{thm:elimination} holds also in the case where $\mathcal{U}$ is a local minimizer of $\mathcal{G}$, \emph{i.e.}, if there exists a ball $B_{R}(\bar{x})\subset\subset\o$
\[
\g(\mathcal{U})\leq\g(\mathcal{V})
\]
for every $\mathcal{V}\in\mathcal{C}_k$ with $U_i\triangle V_i\subset B_{R}(\bar{x})$, for $i=1,\dots,k$.
Moreover, the result continues to be satisfied when
\[
V:=U_{\sigma(1)}\cup\dots \cup U_{\sigma(k-2)}\,,
\]
where  $\sigma:\{1,\dots,k\}\to\{1,\dots,k\}$ is a permutation.
\end{remark}

Our strategy to prove Theorem \ref{thm:elimination} is similar to the one used by Leonardi in \cite{Leo}.
The idea is the following: let $\mathcal{U}=(U_1,\dots,U_k)$ be as in the statement of the theorem.
For a.e. $r\in(0,r_0)$ we seek for a variation $\mathcal{\widetilde{U}}$ of $\mathcal{U}$ such that the difference of the energy of the two configurations is controlled by the perimeter of $V$.
The family of perturbations we consider is the one where we locally divide the partition $\mathcal{U}$ in two classes, \emph{i.e.}, we consider $\mathcal{U}_1:=\{U_i\}_{i\in I}$ and $\mathcal{U}_2:=\{U_i\}_{i\in \{1,\dots,k\}\setminus I}$, for some set of indexes $I\subset\{1,\dots,k\}$ with $1\in I$, $2\not\in I$, and we glue together all the sets in the first class with $U_1$ and all the 
others with $U_2$. To be more precise, following \cite{Leo}, we introduce the following notation.

\begin{definition}
Let $k\in\N$ and let $I_1\subset\{1,\dots,k\}$ with $1\in I_1$ and $2\not\in I_1$.
Set $I_2:=\{1,\dots,k\}\setminus I_1$.
For $x\in\o$ let $r>0$ be such that $B_r(x)\subset\o$.
If $\mathcal{U}=(U_1,\dots,U_k)\in\mathcal{C}_k(\o)$, define $\mathcal{U}_{I_1}^r=(\widetilde{U}_1,\dots,\widetilde{U}_k)\in\mathcal{C}_k(\o)$ as
\begin{equation}\label{eq:defvar}
\widetilde{U}_i:=
\left\{
\begin{array}{ll}
U_i\setminus B_r(x) & \text{ if } i>2\,,\\
&\\
U_i \cup \displaystyle\bigcup_{j\in I_i} \left(U_j\cap B_r(x)\right) & \text{ if } i=1,2\,.\\
\end{array}
\right.
\end{equation}
Moreover, define
\[
\Delta_r \mathrm{WPer}(\mathcal{U}):=\mathrm{WPer}(\mathcal{U}^r_{I_1}; B_r(x))
    -\mathrm{WPer}(\mathcal{U}; B_r(x))\,.
\]
\end{definition}

In order to prove the elimination Theorem \ref{thm:elimination}, we need to invoke a result proved by Leonardi in \cite{Leo}.
This is the technical point where condition $(H3)$ of Theorem \ref{thm:elimination} is needed.

\begin{lemma}\label{lem:mincut}
Let $\mathcal{U}=(U_1,\dots,U_k)\in\mathcal{C}_k(\o)$ be a solution to the minimization problem \eqref{eq:minpbCacc}, and assume that hypothesis $(H3)$ of Theorem \ref{thm:kfixedcolors} holds.
Let $x\in\o$ and $R>0$ be such that $B_R(x)\subset\subset\o$.
Then, for almost all $r\in(0,R)$ there exists $I^r_1\subset\{1,\dots,k\}$ with $1\in I^r_1$, and $2\not\in I^r_1$, such that
\begin{equation}\label{eq:mincut}
\Delta_r\mathrm{WPer}(\mathcal{U})\leq -C_1\,\per(V; B_r(x))\,,
\end{equation}
where $V:=U_3\cup\dots\cup U_k$ and $C_1>0$ is a constant depending only on $\mathcal{U}$.
\end{lemma}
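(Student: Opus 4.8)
The plan is to reduce the statement to a purely combinatorial inequality on the finite weighted graph that encodes the interfaces of $\mathcal{U}$ inside $B_r(x)$, and then to invoke the weighted min-cut estimate of Leonardi \cite{Leo}. First I would fix, for almost every $r\in(0,R)$, the interface masses $\sigma_{ij}:=\hno(\rb U_i\cap\rb U_j\cap B_r(x))$; for a.e. $r$ the slices $U_i\cap B_r(x)$ have finite perimeter and $\hno(\rb U_i\cap\partial B_r(x))=0$, so that $\mathcal{U}^r_{I_1}\in\mathcal{C}_k(\o)$ and no interface is charged on the sphere. A direct computation of the interior weighted perimeter of the modified partition shows that, since inside $B_r(x)$ every phase with index in $I_1$ is relabelled $1$ and every phase with index in $I_2$ is relabelled $2$, all old interfaces internal to a single class disappear and the surviving interface carries the single weight $\alpha_{12}$; hence $\mathrm{WPer}(\mathcal{U}^r_{I_1};B_r(x))=\alpha_{12}\sum_{i\in I_1,\,j\in I_2}\sigma_{ij}$, while $\mathrm{WPer}(\mathcal{U};B_r(x))=\sum_{i<j}\alpha_{ij}\sigma_{ij}$. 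Using the structure theorem for Caccioppoli partitions (Theorem \ref{thm:struct}) one also identifies $\per(V;B_r(x))=\sum_{i\in\{1,2\}}\sum_{j\ge 3}\sigma_{ij}$, the interfaces separating $V$ from $U_1\cup U_2$.

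With this, the claim \eqref{eq:mincut} becomes the assertion that one can choose a cut $(I_1,I_2)$ of $\{1,\dots,k\}$, with $1\in I_1$ and $2\in I_2$, so that
\[
\alpha_{12}\sum_{i\in I_1,\,j\in I_2}\sigma_{ij}\;\le\;\sum_{i<j}\alpha_{ij}\sigma_{ij}\;-\;C_1\sum_{i\in\{1,2\}}\sum_{j\ge 3}\sigma_{ij}\,,
\]
an inequality involving only the nonnegative numbers $\sigma_{ij}$ and the weights $\alpha_{ij}=|a_i-a_j|$, which form a metric. Here hypothesis $(H3)$ is decisive: it makes the triangle inequalities strict, so that all deficits $\delta_{ijk}:=\alpha_{ik}+\alpha_{kj}-\alpha_{ij}$ are strictly positive, and $C_1$ comes out depending only on $\min_{i,j,k}\delta_{ijk}$ and $\min_{i\ne j}\alpha_{ij}$ (hence only on $\mathcal{U}$, not on $r$). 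To see why strictness is what drives the gain, consider $k=3$: relabelling phase $3$ to whichever of phases $1,2$ it shares the larger interface with produces a reduction in weighted perimeter equal to $\delta_{123}\min(\sigma_{13},\sigma_{23})+\alpha_{13}\,|\sigma_{13}-\sigma_{23}|$ (or the symmetric expression with $\alpha_{23}$), which dominates $C_1(\sigma_{13}+\sigma_{23})$ with $C_1=\min(\delta_{123}/2,\alpha_{13})$ precisely because $\delta_{123}>0$.

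For general $k$ this is exactly the content of the weighted elimination/min-cut lemma of Leonardi \cite[Theorem 3.1 and its proof]{Leo}: among all binary labellings separating $1$ from $2$, the cheapest one saves at least a fixed fraction of the perimeter of the eliminated phases, the fraction being controlled by the triangle-inequality deficits. I would therefore check that the interface data $\{\sigma_{ij}\}$ meet the hypotheses of Leonardi's lemma and quote it to produce $I^r_1$ and the constant $C_1$. The main obstacle is precisely this combinatorial min-cut estimate: a naive binary collapse can \emph{increase} the perimeter along interfaces with $\alpha_{ij}<\alpha_{12}$, and the internal interfaces of $V$ (those with both indices $\ge 3$, which are not counted in $\per(V;B_r(x))$ yet may be split by the cut) must be prevented from eating up the gain, so the cut has to be chosen globally rather than greedily—this is the step where $(H3)$ is indispensable. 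I would note finally that minimality of $\mathcal{U}$ plays no role in this lemma: the estimate is a statement about an arbitrary partition at almost every radius $r$, and it follows at once from the slicing of the first step together with Leonardi's combinatorial inequality.
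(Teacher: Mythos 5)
Your proposal is correct and takes essentially the same route as the paper: the paper states this lemma without any proof, attributing it entirely to Leonardi \cite{Leo}, and your argument---slicing at a.e.\ radius $r$ to reduce \eqref{eq:mincut} to a weighted min-cut inequality on the interface masses $\sigma_{ij}$, with the combinatorial core (where the strict triangle inequality $(H3)$ enters) deferred to Leonardi's result---is precisely what that citation encapsulates. Your supplementary points are also accurate: the explicit $k=3$ verification is sound, and minimality of $\mathcal{U}$ indeed plays no role in this lemma (it is only used afterwards, in the proof of Theorem \ref{thm:elimination}, to get $0\leq \g(\mathcal{U}^r_{I^r_1})-\g(\mathcal{U})$), so the constant $C_1$ depends only on the weights $\alpha_{ij}$.
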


Lemma \ref{lem:mincut} allows us to prove the elimination property for a solution of the minimization problem \eqref{eq:minpbCacc}.

\begin{proof}[Proof of Theorem \ref{thm:elimination}]
Let $r_0>0$ be such that $B_{r_0}(x)\subset\subset\o$. For $r\in(0,r_0)$ define $\alpha(r):=|V\cap B_r(x)|$.
Then $\alpha$ is a non-decreasing function. Using the coarea formula (see \cite[Theorem 2.93]{AFP}) and the differentiability a.e. of monotone functions we obtain that for a.e. $r\in(0,r_0)$ it holds
\begin{equation}\label{eq:der}
\alpha'(r)=\int_{\partial B_r(x)} \chi_V\dhno\,.
\end{equation}
Since for every $i\in\{1,\dots,k\}$ the set $U_i$ has finite perimeter in $\o$, for a.e. $r\in(0,r_0)$ we have that
\begin{equation}\label{eq:noper}
\per(U_i\,;\,\partial B_r(x))=0\,.
\end{equation}
For a.e. $r\in(0,r_0)$ we have that \eqref{eq:der} and \eqref{eq:noper} hold and that Lemma \ref{lem:mincut} provides a set of indexes $I^r_1$ relative to $\mathcal{U}$ for which \eqref{eq:mincut} is satisfied.
Fix one of these $r\in(0,r_0)$.

Our goal is to get an estimate of $\g(\mathcal{U}^r_{I^r_1})-\g(\mathcal{U})$ in terms of $\alpha(r)$ and $\alpha'(r)$.\\

\emph{Step 1. Estimate of the weighted perimeter.}
 We have
\begin{align}\label{eq:firstineq}
\Delta_{r_0}\,\mathrm{WPer}(\mathcal{U})&=\mathrm{WPer}(\mathcal{U}^r_{I^r_1}\,;\, B_{r_0}(x))
    -\mathrm{WPer}(\mathcal{U}\,;\, B_{r_0}(x)) \nonumber \\
&=\mathrm{WPer}(\mathcal{U}^r_{I^r_1}\,;\, B_{r_0}(x)\setminus \bar{B}_r(x))
    +\mathrm{WPer}(\mathcal{U}^r_{I^r_1}\,;\, \partial B_r(x)) \nonumber\\
&\hspace{0.6cm} +\mathrm{WPer}(\mathcal{U}^r_{I^r_1}\,;\, B_r(x))  
-\mathrm{WPer}(\mathcal{U}\,;\, B_{r_0}(x)\setminus \bar{B}_r(x)) \nonumber\\
&\hspace{0.6cm} -\mathrm{WPer}(\mathcal{U}\,;\, \partial B_r(x))
    -\mathrm{WPer}(\mathcal{U}\,;\, B_r(x))\,.
\end{align}
The fact that $\mathcal{U}^r_{I^r_1}$ and $\mathcal{U}$ coincide in $B_{r_0}(x)\setminus \bar{B}_r(x)$ yields
\begin{equation}\label{eq:equal}
\mathrm{WPer}(\mathcal{U}^r_{I^r_1}\,;\, B_{r_0}(x)\setminus \bar{B}_r(x))
    -\mathrm{WPer}(\mathcal{U}\,;\, B_{r_0}(x)\setminus \bar{B}_r(x))=0\,.
\end{equation}
Moreover, by \eqref{eq:noper} we have that (see \cite[Remark 3.57]{AFP})
\begin{equation}\label{eq:alphaprime}
\mathrm{WPer}(\mathcal{U}^r_{I^r_1}\,;\, \partial B_r(x))\leq M\alpha'(r)\,,
\end{equation}
where $M:=\max_{i,j}\alpha_{ij}$.
In view of \eqref{eq:firstineq}, \eqref{eq:equal}, and \eqref{eq:alphaprime}, we get
\begin{equation}\label{eq:estwideper}
\Delta_{r_0}\,\mathrm{WPer}(\mathcal{U})\leq \Delta_r\mathrm{WPer}(\mathcal{U})+M\alpha'(r) \,.
\end{equation}

\emph{Step 2. Estimate of the volume terms.} Using the inequality (see \cite[Proposition 4.64]{FonLeo})
\begin{equation}\label{eq:ineq1}
\bigl| |a|^p -|b|^p \bigr|\leq 2^{p-1}p|a-b|(|a|^{p-1}+|b|^{p-1})\,,
\end{equation}
and the definition of $\widetilde{U}_i$ (see \eqref{eq:defvar}), for $i=1,2$ we have that 
\begin{align}\label{eq:estid}
&\int_{\widetilde{U}_i\cap B_r(x)\setminus D} |a_i-f|^p \dx - \sum_{j\in I_i^r} \int_{U_j\cap B_r(x)\setminus D} |a_j-f|^p \dx \nonumber\\
&\hspace{1cm}=\sum_{j\in I_i^r} \int_{U_j\cap B_r(x)\setminus D} \left(\, |a_i-f|^p - |a_j-f|^p \,\right) \dx \nonumber \\
&\hspace{1cm}\leq 2^{p-1}p\sum_{j\in I_i^r} |a_i-a_j|\int_{U_j\cap B_r(x)\setminus D} \left(\, |a_i-f|^{p-1}+|a_j-f|^{p-1} \,\right) \dx
    \nonumber\\
&\hspace{1cm}\leq \mathrm{diam}(A)2^{p-1}p\sum_{j\in I_i^r} \Bigl(\, \|a_i-f\|_{L^{N(p-1)}(U_j\cap B_{r_0}(x)\setminus D;\R^M)} \nonumber \\
&\hspace{1.6cm}+\|a_j-f\|_{L^{N(p-1)}(U_j\cap B_{r_0}(x)\setminus D;\R^M)}  \,\Bigr)|U_j\cap B_r(x)\setminus D|^{\frac{N-1}{N}}\,,
\end{align}
where in the last step we used H\"older inequality together with the fact that $f\in L^{q}(\o;\R^M)$ with $q\geq N(p-1)$ (see hypothesis $(H1)$).
Here $\mathrm{diam}(A):=\max\{|a_r-a_s| \,:\, r,s\in\{1,\dots,k\}\}$
denotes the diameter of the set $A$.
Similarly, we deduce that
\begin{align}\label{eq:estod}
&\int_{\widetilde{U}_i\cap B_r(x)\cap D} |L(a_i\cdot e)-L(f\cdot e)|^p \dx - \sum_{j\in I_i^r} \int_{U_j\cap B_r(x)\cap D} |L(a_j\cdot e)-L(f\cdot e)|^p \dx \nonumber\\
&\hspace{1cm}\leq \mathrm{diam}(L(A\cdot e))2^{p-1}p\sum_{j\in I_i^r} \Bigl(\, \|L(a_i\cdot e)-L(f\cdot e)\|_{L^{N(p-1)}(U_j\cap B_{r_0}(x)\cap D;\R^M)} \nonumber \\
&\hspace{2cm}+\|L(a_j\cdot e)-L(f\cdot e)\|_{L^{N(p-1)}(U_j\cap B_{r_0}(x)\cap D;\R^M)}  \,\Bigr)|U_j\cap B_r(x)\cap D|^{\frac{N-1}{N}}\,,
\end{align}
where we have used the fact that $L(f\cdot e)\in L^{q}(\o;\R^M)$ with $q\geq N(p-1)$ (see hypothesis $(H2)$).
Here $\mathrm{diam}(L(A\cdot e)):=\max\{ |L(a_r\cdot e)-L(a_s\cdot e)| \,:\, r,s\in\{1,\dots,k\} \}$.
Thus, we obtain that there exists a constant $C_2(r_0)$, with $C_2(r_0)\to0$
as $r_0\to0$ such that, for $i=1,2$, it hold
\begin{align}\label{eq:estid1}
&\int_{\widetilde{U}_i\cap B_r(x)\setminus D} |a_i-f|^p \dx - \sum_{j\in I_i^r} \int_{U_j\cap B_r(x)\setminus D} |a_j-f|^p \dx \nonumber\\
&\hspace{1cm}\leq C_2(r_0)\sum_{j\in I_i^r} |U_j\cap B_r(x)\setminus D|^{\frac{N-1}{N}}\,,
\end{align}
and
\begin{align}\label{eq:estod1}
&\int_{\widetilde{U}_i\cap B_r(x)\cap D} |L(a_i\cdot e)-L(f\cdot e)|^p \dx  \nonumber\\
&\hspace{4cm} - \sum_{j\in I_i^r} \int_{U_j\cap B_r(x)\cap D} |L(a_j\cdot e)-L(f\cdot e)|^p \dx \nonumber\\
&\hspace{2cm}\leq C_2(r_0)\sum_{j\in I_i^r} |U_j\cap B_r(x)\cap D|^{\frac{N-1}{N}}\,.
\end{align}
Using \eqref{eq:defvar}, \eqref{eq:estid1} we get
\begin{align}\label{eq:ineqvolume1}
&\lambda\sum_{j=1}^k \left[\, \int_{\widetilde{U}_j\cap B_r(x)\,\meno\, D} |a_j-f|^p\dx
    -\int_{U_j\cap B_r(x)\,\meno\, D} |a_j-f|^p\dx \,\right] \nonumber \\
&\hspace{1cm}=\lambda\sum_{i=1,2}\left[\, \int_{\widetilde{U}_i\cap B_r(x)\setminus D} |a_i-f|^p \dx - \sum_{j\in I_i^r} \int_{U_j\cap B_r(x)\setminus D} |a_j-f|^p \dx \,\right] \nonumber \\
&\hspace{1cm}\leq\lambda C_2(r_0)\sum_{i=1,2} \sum_{j\in I^r_i}|U_j\cap B_r(x)\meno D|^{\frac{N-1}{N}} \nonumber\\
&\hspace{1cm}\leq\lambda C_2(r_0)k^{\frac{1}{N}}|V\cap B_r(x)\meno D|^{\frac{N-1}{N}}\,,
\end{align}
where in the last step we used the definition of $V$ and the inequality
\begin{equation}\label{eq:ineqconc}
\sum_{i=1}^k |p_i|^{\frac{N-1}{N}}\leq k^{\frac{1}{N}}\left(\, \sum_{i=1}^k |p_i| \,\right)^{\frac{N-1}{N}}\,,
\end{equation}
that results from the concavity of the function $|p|\mapsto |p|^{\frac{N-1}{N}}$.
With a similar argument, and by \eqref{eq:estod1} and \eqref{eq:ineqconc}, we obtain
\begin{align}\label{eq:ineqvolume2}
&\mu\sum_{j=1}^k \Biggl[\, \int_{\widetilde{U}_j\cap B_r(x)\,\cap\, D} |L(a_j\cdot e)-L(f\cdot e)|^p\dx \nonumber\\
&\hspace{4cm}-\int_{U_j\cap B_r(x)\,\cap\, D} |L(a_j\cdot e)-L(f\cdot e)|^p\dx \,\Biggr] \nonumber \\
&\hspace{2cm}\leq \mu C_2(r_0) k^{\frac{1}{N}}|V\cap B_r(x)\cap D|^{\frac{N-1}{N}}\,.
\end{align}
Thus, \eqref{eq:ineqvolume1} and \eqref{eq:ineqvolume2} yield
\begin{align}\label{eq:ineqvolume}
&\lambda\sum_{j=1}^k \left[\, \int_{\widetilde{U}_j\cap B_r(x)\,\meno\, D} |a_j-f|^p\dx
    -\int_{U_j\cap B_r(x)\,\meno\, D} |a_j-f|^p\dx \,\right] \nonumber \\
&\hspace{2cm}+\mu\sum_{j=1}^k \Biggl[\, \int_{\widetilde{U}_j\cap B_r(x)\,\cap\, D} |L(a_j\cdot e)-L(f\cdot e)|^p\dx \nonumber \\
&\hspace{3cm}-\int_{U_j\cap B_r(x)\,\cap\, D} |L(a_j\cdot e)-L(f\cdot e)|^p\dx \,\Biggr] \nonumber \\
&\hspace{1cm}\leq C_3(r_0) |V\cap B_r(x)|^{\frac{N-1}{N}} \nonumber\\
&\hspace{1cm}= C_3(r_0)\alpha(r)^{\frac{N-1}{N}}\,,
\end{align}
where we used again inequality \eqref{eq:ineqconc}. Here $C_3(r_0)\to0$ as $r_0\to0$.\\

\emph{Step 3. Conclusion.}
The minimality of $\mathcal{U}$, together with \eqref{eq:mincut}, \eqref{eq:estwideper} and \eqref{eq:ineqvolume}, yields
\begin{align*}\label{eq:firstest}
0&\leq \g(\mathcal{U}^r_{I^r_1})-\g(\mathcal{U})\\
&\leq \Delta_r\mathrm{WPer}(\mathcal{U})+M\alpha'(r) + C_3(r_0)\alpha(r)^{\frac{N-1}{N}}\\
&\leq -C_1\per(V\,;\, B_r(x))+M\alpha'(r)+ C_3(r_0)\alpha(r)^{\frac{N-1}{N}}\\
&\leq -C_1N\omega_N^{\frac{1}{N}}\alpha(r)^{\frac{N-1}{N}}+M\alpha'(r)+ C_3(r_0)\alpha(r)^{\frac{N-1}{N}}\,,
\end{align*}
where the last inequality follows from the isoperimetric inequality and the definition of $\alpha$.
Here $C_1$ is the constant given by Lemma \ref{lem:mincut}.
We then deduce that
\[
(\alpha^{\frac{1}{N}}(r))'=\frac{1}{N}\alpha'(r)\alpha^{\frac{1-N}{N}}(r)\geq \frac{C_1N\omega_N^{\frac{1}{N}}-C_3(r_0)}{MN}\,,
\]
and integrating this inequality from $r/2$ to $r$ yields
\begin{equation}\label{eq:estalpha}
\alpha^{\frac{1}{N}}(r)-\alpha^{\frac{1}{N}}\left(\frac{r}{2}\right)\geq\frac{r}{2}\frac{C_1N\omega_N^{\frac{1}{N}}-C_3(r_0)}{MN}\,.
\end{equation}
Choosing $r_0$ sufficiently small in such a way that $C_3(r_0)<\frac{C_1 N\omega_N^{\frac{1}{N}}}{2}$, from
\eqref{eq:estalpha} we get
\[
\alpha^{\frac{1}{N}}(r)-\alpha^{\frac{1}{N}}\left(\frac{r}{2}\right)\geq \frac{r}{2}\eta^{\frac{1}{N}}\,,
\]
where we set
\[
\eta:=\left(\, \frac{C_1\omega_N^{\frac{1}{N}}}{4M} \,\right)^N\,.
\]
In view of \eqref{eq:smallness} we now take $0<r<r_0$ such that $\alpha(r)\leq\eta r^N$, by \eqref{eq:estalpha} we get
\[
\alpha\left(\frac{r}{2}\right)\leq0\,.
\]
Since $\alpha(r/2)\geq0$, we conclude that $\alpha(r/2)=0$.
\end{proof}

The proof of Theorem \ref{thm:kfixedcolors} hinged on two results.
The first is a general isoperimetric inequality (see \cite[Lemma 4.2]{TamCon}).

\begin{lemma}\label{lem:genisop}
There exist two constants $\gamma_1,\gamma_2>0$, depending only on $N$, with the following property:
consider a ball $B_r\subset\R^N$, a finite set $A\subset\R^M$, and let $u\in BV(B_r;A)$ satisfy
\[
\hno(J_u\cap B_r)<\gamma_1\, r^{N-1}\,.
\]
Then there exists $i\in\{1,\dots,k\}$ such that
\[
|B_r\setminus \o_i|^{\frac{N}{N-1}}\leq \gamma_2\,\hno(J_u\cap B_r)\,,
\]
where we write $u$ as in \eqref{eq:u}.
\end{lemma}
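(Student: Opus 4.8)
The plan is to combine the relative isoperimetric inequality in a ball with the elementary fact that the perimeter of each phase of $u$ is controlled by the total jump set, and then to exploit the smallness hypothesis to single out one phase $\o_{i_0}$ filling up almost all of $B_r$. First I would record the two ingredients. The relative isoperimetric inequality supplies a dimensional constant $C(N)>0$ such that every set $E$ of finite perimeter in $B_r$ satisfies
\[
\min\{|E|,\,|B_r\setminus E|\}^{\frac{N-1}{N}}\le C(N)\,\per(E;B_r).
\]
On the other hand, writing $u=\sum_{i=1}^k a_i\chi_{\o_i}$ and applying the structure theorem for Caccioppoli partitions (Theorem~\ref{thm:struct}, with $B_r$ in place of $\o$), one has $\hno(J_u\cap B_r)=\sum_{i<j}\hno(\rb\o_i\cap\rb\o_j\cap B_r)$ and $\per(\o_i;B_r)=\sum_{j\ne i}\hno(\rb\o_i\cap\rb\o_j\cap B_r)$, whence
\[
\sum_{i=1}^k\per(\o_i;B_r)=2\,\hno(J_u\cap B_r),\qquad \per(\o_i;B_r)\le 2\,\hno(J_u\cap B_r)\quad\text{for every }i.
\]

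The core step, which I expect to be the main obstacle, is to produce a single dominant phase, and here the smallness hypothesis enters. I argue by contradiction, assuming $|\o_i|\le\tfrac12|B_r|$ for every $i$, so that $\min\{|\o_i|,|B_r\setminus\o_i|\}=|\o_i|$. Applying the isoperimetric inequality to each $\o_i$ and raising to the power $\frac{N}{N-1}$ gives $|\o_i|\le C(N)^{\frac{N}{N-1}}\per(\o_i;B_r)^{\frac{N}{N-1}}$; estimating one factor $\per(\o_i;B_r)^{\frac{1}{N-1}}\le(2\gamma_1 r^{N-1})^{\frac{1}{N-1}}=(2\gamma_1)^{\frac{1}{N-1}}r$ by the hypothesis and summing over $i$, I obtain
\[
\omega_N r^N=\sum_{i=1}^k|\o_i|\le C(N)^{\frac{N}{N-1}}(2\gamma_1)^{\frac{1}{N-1}}r\sum_{i=1}^k\per(\o_i;B_r)\le 2^{1+\frac{1}{N-1}}C(N)^{\frac{N}{N-1}}\gamma_1^{\frac{N}{N-1}}r^N.
\]
Dividing by $r^N$ and choosing $\gamma_1=\gamma_1(N)$ small enough that $2^{1+\frac{1}{N-1}}C(N)^{\frac{N}{N-1}}\gamma_1^{\frac{N}{N-1}}<\omega_N$ yields a contradiction. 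Hence there is an index $i_0$ with $|\o_{i_0}|>\tfrac12|B_r|$, and such an index is necessarily unique because the phases are disjoint.

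It then remains to conclude from the dominant phase. Since $|\o_{i_0}|>\tfrac12|B_r|$ we have $\min\{|\o_{i_0}|,|B_r\setminus\o_{i_0}|\}=|B_r\setminus\o_{i_0}|$, and the isoperimetric inequality combined with the perimeter bound gives
\[
|B_r\setminus\o_{i_0}|^{\frac{N-1}{N}}\le C(N)\,\per(\o_{i_0};B_r)\le 2C(N)\,\hno(J_u\cap B_r),
\]
which is the claimed estimate with $\gamma_2:=2C(N)$ (I note that the exponent produced naturally by this argument is $\frac{N-1}{N}$, the scale-correct one). The only genuinely delicate point is the bookkeeping in the contradiction step: one must check that the single power of $r$ extracted from $\per(\o_i;B_r)^{1/(N-1)}$ combines with the summed perimeter $\sum_i\per(\o_i;B_r)$ and with the bound $\hno(J_u\cap B_r)<\gamma_1 r^{N-1}$ to reproduce exactly $r^N$ on the right-hand side, so that the resulting constant depends on $N$ alone and $\gamma_1$ can be fixed independently of $r$, $k$, $A$ and $u$.
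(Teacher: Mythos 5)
Your argument is correct, and it is worth noting that the paper does not actually prove Lemma \ref{lem:genisop}: it is quoted from \cite[Lemma 4.2]{TamCon} without proof. What you have written is, in substance, the standard argument behind that cited result: the relative isoperimetric inequality in a ball, the structure theorem giving $\sum_{i=1}^k\per(\o_i;B_r)=2\,\hno(J_u\cap B_r)$ and $\per(\o_i;B_r)\le 2\,\hno(J_u\cap B_r)$ (in fact $\per(\o_i;B_r)\le\hno(J_u\cap B_r)$, since each interface appears only once in the sum over unordered pairs), and a contradiction/pigeonhole step producing a dominant phase. Your bookkeeping in that step is exact: splitting $\per(\o_i;B_r)^{N/(N-1)}=\per(\o_i;B_r)^{1/(N-1)}\per(\o_i;B_r)$, bounding the first factor by $(2\gamma_1)^{1/(N-1)}r$ via the smallness hypothesis, and summing the second factor yields $\omega_N r^N\le 2^{N/(N-1)}C(N)^{N/(N-1)}\gamma_1^{N/(N-1)}r^N$, so the powers of $r$ cancel and $\gamma_1$ can indeed be fixed depending on $N$ alone, independently of $r$, $k$, $A$ and $u$.

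One point you flag deserves to be made explicit: the exponent $\frac{N}{N-1}$ on the volume in the printed statement is a typo. As you observe, the scale-correct (and provable) form is $|B_r\setminus\o_{i_0}|^{\frac{N-1}{N}}\le\gamma_2\,\hno(J_u\cap B_r)$; with the printed exponent no constant depending only on $N$ could work for all radii, by scaling. Moreover, the paper itself uses the lemma precisely in the form you proved: see \eqref{eq:estlemgen} in Step 3 of the proof of Theorem \ref{thm:kfixedcolors}, where $|B_r(x)\setminus\o_{\bar{j}}|^{\frac{N-1}{N}}\le\gamma_2\,\hno(J_u\cap B_r(x))$ is invoked and then raised to the power $\frac{N}{N-1}$. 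So your proof establishes exactly the statement the paper needs, and additionally supplies the argument the paper leaves to the literature. (A trivial caveat: the argument, like the statement itself, is only meaningful for $N\ge2$.)
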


The second result is a well-known regularity property of almost-minimal sets, due to Tamanini (see \cite[Theorem 1]{Tam}).

\begin{theorem}\label{thm:regTam}
Let $U\subset\R^N$ be an open set, and let $E\subset\R^N$ be a set of finite perimeter with the following property:
there exist constants $C>0$, $R>0$, and $\alpha\in(0,1)$, such that for every $x\in U$ and every $r\in(0,R)$, it holds
\[
\hno(\rb E \cap B_r(x)\cap U)\leq \hno(\rb F\cap B_r(x)\cap U)+Cr^{N-1+2\alpha}\,,
\]
for every set $F\subset\R^N$ of finite perimeter with $F\triangle E\subset\subset B_r(x)$.
Then $\rb E\cap U$ is a $C^{1,\alpha}$-hypersurface up to a closed $\hno$-negligible set.
\end{theorem}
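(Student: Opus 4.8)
The statement is the classical interior regularity theorem for almost minimizers of perimeter, and the plan is to adapt De~Giorgi's regularity theory for minimal surfaces to the almost-minimal setting. The guiding point is that the admitted deficit $Cr^{N-1+2\alpha}$ is of strictly higher order than the perimeter scale $r^{N-1}$, so it is negligible under blow-up and, moreover, produces exactly the Hölder exponent $\alpha$. Throughout write $\mu:=\hno\restr(\rb E\cap U)$.

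First I would establish Ahlfors regularity of $\mu$: inserting the competitors $E\cup B_r(x)$ and $E\setminus B_r(x)$ into the almost-minimality inequality and invoking the relative isoperimetric inequality yields constants $0<c_1\le c_2$ with
\[
c_1\,r^{N-1}\le\hno(\rb E\cap B_r(x))\le c_2\,r^{N-1}
\]
for every $x\in\rb E\cap U$ and every sufficiently small $r$. These density bounds, together with the compactness of sets of finite perimeter, guarantee that the rescalings $E_{x_0,r}:=(E-x_0)/r$ subconverge in $L^1_{loc}$, with convergence of perimeters, to a limit set $E_\infty$; since the rescaled deficit is $O(r^{2\alpha})\to0$, the limit $E_\infty$ is a genuine local perimeter minimizer.

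The core of the argument is the excess-decay (improvement-of-flatness) lemma, which is the step I expect to be the main obstacle. Introducing the spherical excess
\[
\mathbf{e}(x,r):=\min_{\nu\in\S^{N-1}}\frac{1}{r^{N-1}}\int_{\rb E\cap B_r(x)}\frac{|\nu_E(y)-\nu|^2}{2}\dhno(y)\,,
\]
I would prove that there exist $\varepsilon_0>0$ and $C_0>0$ such that, for each $\tau\in(0,1/2)$, the smallness $\mathbf{e}(x,r)<\varepsilon_0$ forces
\[
\mathbf{e}(x,\tau r)\le C_0\,\tau^2\,\mathbf{e}(x,r)+C_0\,\tau^{1-N}r^{2\alpha}\,.
\]
This is proved by a contradiction--compactness argument: were the estimate to fail along a sequence with excess tending to zero, the blow-ups would converge to a minimal boundary of vanishing excess, hence a hyperplane, whose excess decays quadratically, while the almost-minimality error contributes precisely the $r^{2\alpha}$ term. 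The delicate ingredient is converting the $L^1$ (height) convergence furnished by compactness into the $L^2$ (tilt) control measured by $\mathbf{e}$, which requires a Caccioppoli-type reverse Poincaré inequality.

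Finally I would iterate. Fixing $\tau$ small enough that $C_0\tau^2\le\tfrac12\tau^{2\alpha}$ and absorbing terms in the standard way, the decay estimate yields $\mathbf{e}(x,\rho)\le C\rho^{2\alpha}$ for all $x$ near any point $x_0$ where $\mathbf{e}(x_0,r_0)<\varepsilon_0$ and for small $\rho$. By the Campanato characterization of Hölder continuity this makes the generalized normal $\nu_E$ of class $C^{0,\alpha}$ there, so $\rb E$ is a $C^{1,\alpha}$ hypersurface in a neighborhood; the set $\mathrm{Reg}(E)$ of such points is therefore relatively open in $\rb E\cap U$. To control the complement, note that by De~Giorgi's structure theorem the blow-up of $E$ at $\hno$-a.e.\ $x\in\rb E$ is a half-space, whence $\mathbf{e}(x,r)\to0$ and $x\in\mathrm{Reg}(E)$. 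Consequently the singular set $\Sigma:=(\rb E\cap U)\setminus\mathrm{Reg}(E)$ is closed in $U$ and satisfies $\hno(\Sigma)=0$, which is the desired conclusion.
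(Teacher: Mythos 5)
The paper never proves this statement: Theorem \ref{thm:regTam} is quoted as a known result of Tamanini (\cite[Theorem 1]{Tam}) and used as a black box in Step 2 of the proof of Theorem \ref{thm:kfixedcolors}, so there is no in-paper argument to compare against. Your outline is a faithful reconstruction of the standard proof of that cited theorem: density estimates from the competitors $E\cup B_r(x)$ and $E\setminus B_r(x)$, blow-up compactness (the rescaled deficit $Cr^{2\alpha}$ vanishes, so blow-up limits are genuine perimeter minimizers), excess improvement via contradiction--compactness with a reverse Poincar\'e inequality, iteration yielding $\mathbf{e}(x,\rho)\leq C\rho^{2\alpha}$, a Campanato-type argument giving $\nu_E\in C^{0,\alpha}$, and the structure theorem to show the singular set is $\hno$-null; you also correctly identify how the exponent $\alpha$ arises from the deficit order $r^{N-1+2\alpha}$, and the iteration scheme you propose ($C_0\tau^2\leq\tfrac12\tau^{2\alpha}$, then absorption) does close. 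One technical point to fix in a full write-up: to conclude that the singular set is closed in $U$ you should run the regular/singular dichotomy on $\mathrm{spt}|D\chi_E|\cap U$ rather than on $\rb E\cap U$, since the reduced boundary itself need not be relatively closed; the density estimates give $\hno\bigl((\mathrm{spt}|D\chi_E|\setminus\rb E)\cap U\bigr)=0$, which reconciles the two formulations and yields the statement as written.
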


We are now in position to prove Theorem \ref{thm:kfixedcolors}.

\begin{proof}[Proof of Theorem \ref{thm:kfixedcolors}]
The existence of a solution $u\in\mathcal{A}_A$ to the minimization problem \eqref{eq:minpbfinite} follows from Lemma \ref{lem:equiv} and Proposition \ref{prop:existence}.

Let $\mathcal{U}=(\o_1,\dots,\o_k)\in\mathcal{C}_k(\o)$ be the corresponding solution of the minimization problem \eqref{eq:minpbCacc} given by Lemma \ref{lem:equiv}, where we write
\[
u=\sum_{i=1}^k a_i\chi_{\o_i}\,.
\]

\emph{Step 1: Proof of $(i)$.} Assume that $(H1)$, $(H2)$ and $(H3)$ hold.
By definition of Caccioppoli partition, for every $i=1,\dots,k$, we have that $\o_i$ coincides with its set of points of density $1$.
Let $\eta$ be the constant given by Theorem \ref{thm:elimination}. Then, for every $x\in \o_i$ it is possible to find $r>0$ such that
\[
|B_r(x)\setminus \o_i|<\eta r^N\,.
\]
Applying Theorem \ref{thm:elimination} and using Remark \ref{rem:perm}, we get that $|V\cap B_{r/2}(x)|=0$ for every
\[
V:=\bigcup_{j\in \{1,\dots,k\}\setminus\{i\}} \o_{\sigma(j)}\,,
\]
where $\sigma:\{1,\dots,k\}\setminus\{i\}\to\{1,\dots,k\}\setminus\{i\}$ is a permutation.
Thus, we obtain that
\begin{equation}\label{eq:denszeroball}
|B_{r/2}(x)\cap \o_j|=0
\end{equation}
for all $j\in\{1,\dots,k\}\setminus\{i\}$.
Assume there exists $y\in B_{r/2}(x)$ such that $y\not\in\o_i$. Since $\o_i=\o_i(1)$, there exists $j\in\{1,\dots,k\}\meno\{i\}$ and a sequence $\{\rho_n\}_{n\in\N}$ with $\rho_n\to0$ as $n\to\infty$ such that
\[
\lim_{n\to\infty}\frac{|\o_j\cap B_{\rho_n}(x)|}{\omega_N \rho_n^N}>0\,.
\]
This contradicts \eqref{eq:denszeroball}. Thus $B_{r/2}(x)\subset\o_i$, and in turn $\o_i$ is open.
In particular, we conclude that $J_u$ is a closed set.\\

\emph{Step 2: Proof of $(ii)$.} Since $u$ is of bounded variation, by a standard result (see \cite[Theorem 3.78]{AFP}) we have that $\hno$-a.e. $x\in J_u$ belongs to $\o_i(1/2)\cap \o_j(1/2)$, for just one pair of indexes $i,j$.
Fix
\[
\bar{x}\in J_u\cap\o_i(1/2)\cap \o_j(1/2)\setminus \bigcup_{l\neq i,j} \o_l(1/2)\,.
\]
Using Definition \ref{def:ptdensity} we have that
\[
\lim_{\rho\to0}\frac{|\o_i\cap B_{\rho}(\bar{x})|}{\omega_N \rho^N}=\frac{1}{2}\,,\quad\quad
    \lim_{\rho\to0}\frac{|\o_j\cap B_{\rho}(\bar{x})|}{\omega_N \rho^N}=\frac{1}{2}\,.
\]
Thus, there exists $\rho>0$ such that
\[
|\o_s\cap B_{\rho}(\bar{x})|\leq \eta \rho^N\,,
\]
for all $s\in\{1,\dots,k\}\meno\{i,j\}$, where $\eta>0$ is the constant given by Theorem \ref{thm:elimination}.
Setting $r:=\rho/2$, and arguing as we did in Step 1, we get that $B_r(\bar{x})\subset \o_i\cup \o_j\cup J_u$.

We claim that there exists a constant $C>0$ such that
\begin{equation}\label{eq:minloc}
\hno(\rb\o_i\,;\, B_r(\bar{x}))\leq \hno(\rb E\,;\, B_r(\bar{x}))
    +C r^{N-1+\left(1-\frac{N(p-1)}{q}\right)}\,,
\end{equation}
for any set $E\subset B_r(\bar{x})$ of finite perimeter, with $E\triangle (\o_i\cap B_r(\bar{x}))\subset\subset B_r(\bar{x})$.
If \eqref{eq:minloc} holds, then using the regularity results by Tamanini (see Theorem \ref{thm:regTam}) we obtain that $\rb \o_i$ is, up to a closed $\hno$-negligible set, a relatively open hypersurface of class $C^{1,\frac{1}{2p}}$.

We now prove \eqref{eq:minloc}. Let $E\subset B_r(\bar{x})$ be a set of finite perimeter with $E\triangle (\o_i\cap B_r(\bar{x}))\subset\subset B_r(\bar{x})$. Define the function
\begin{equation}\label{eq:defv}
v(x):=
\left\{
\begin{array}{ll}
a_i & \text{ if } x\in V_i\,,\\
a_j & \text{ if } x\in V_j\,,\\
u(x) & \text{ otherwise}\,,
\end{array}
\right.
\end{equation}
where $V_i:=E\cap B_r(\bar{x})$ and $V_j:=B_r(\bar{x})\setminus E$, and we recall that $u\in BV(\o;A)$, where $A:=\{a_1,\dots, a_k\}\subset\R^M$.
The minimality of $u$ yields
\[
\f(u)\leq \f(v)\,,
\]
from which we get
\begin{equation}\label{eq:prevestimate}
\alpha_{ij} \hno(\rb\o_i\,;\, B_r(x)) \leq \alpha_{ij}\hno(\rb E\,;\, B_r(x))+R\,,
\end{equation}
where
\begin{align}\label{eq:r}
R&:=\lambda \int_{B_r(\bar{x})\,\setminus\,D}  \left( |u-f|^p-|v-f|^p \right) \dx \nonumber\\
&\hspace{1cm}+\mu\int_{B_r(\bar{x})\cap D} \left( |L(u\cdot e)-L(f\cdot e)|^p-|L(v\cdot e)-L(f\cdot e)|^p \right) \dx\,. 
\end{align}
We want to estimate $R$. For the sake of simplicity, in what follows $C>0$ will denote a constant that might change from line to line. Using the definition of $v$ (see \eqref{eq:defv}) and arguing as in \eqref{eq:estid}, we get
\begin{align}\label{eq:estimateoutD}
\Bigg|\,&\int_{B_r(\bar{x})\,\setminus\,D}  \left( |u-f|^p-|v-f|^p \right) \dx\,\Bigg|  \\
&\hspace{0.6cm}\leq C\int_{[(\o_i\triangle V_i)\cup(\o_j\triangle V_j)]\setminus D}
        \left[\, |u-f|^{p-1}+|v-f|^{p-1} \,\right]\dx \leq C r^{N\left( 1-\frac{p-1}{q} \right)} \nonumber\,,
\end{align}
where in the last step we used the fact that $\o_i\triangle V_i\subset\subset B_r(\bar{x})$ and $\o_j\triangle V_j\subset\subset B_r(\bar{x})$.
A similar argument yields
\begin{align}\label{eq:estimateD}
\Bigg|\,\int_{B_r(\bar{x})\cap D} & \left( |L(u\cdot e)-L(f\cdot e)|^p
    -|L(v\cdot e)-L(f\cdot e)|^p \right) \dx\,\Bigg|
\leq C r^{N\left( 1-\frac{p-1}{q} \right)}\,.
\end{align}
Using \eqref{eq:prevestimate}, \eqref{eq:estimateoutD}, \eqref{eq:estimateD}, and the fact that $\min_{i,j}\alpha_{ij}>0$,
we deduce that
\[
\hno(\rb\o_i\,;\, B_r(x))\leq \hno(\rb E\,;\, B_r(x))
    +C r^{N\left( 1-\frac{p-1}{q} \right)}\,,
\]
and this proves \eqref{eq:minloc}.\\

\emph{Step 3: Proof of $(iii)$.}
Let
\begin{equation}\label{eq:beta}
\beta:=\min\left\{\, \gamma_1, \frac{\eta^{\frac{N-1}{N}}}{\gamma_2}  \,\right\}\,,
\end{equation}
where $\gamma_1, \gamma_2>0$ are the constants given by Lemma \ref{lem:genisop} and $\eta>0$ is the one given by Theorem \ref{thm:elimination}.
Let $x\in\overline{J_u}\cap\o$, and assume that
\begin{equation}\label{eq:absurd}
\liminf_{r\to0}\frac{\hno(J_u\cap B_r(x))}{r^{N-1}}<\beta\,.
\end{equation}
Find $r\in(0,r_0)$, where $r_0>0$ is given by Theorem \ref{thm:elimination}, such that
\[
\frac{\hno(J_u\cap B_r(x))}{r^{N-1}}<\gamma_1\,.
\]
By Lemma \ref{lem:genisop} there exists an index $\bar{j}\in\{1,\dots,k\}$ such that
\begin{equation}\label{eq:estlemgen}
|B_r(x)\setminus \o_{\bar{j}}|^{\frac{N-1}{N}}\leq \gamma_2 \hno(J_u\cap B_r(x))\,.
\end{equation}
Using \eqref{eq:beta}, \eqref{eq:absurd} and \eqref{eq:estlemgen}, we get
\[
|B_r(x)\setminus \o_{\bar{j}}|\leq \gamma_2^{\frac{N}{N-1}}\left(\hno(J_u\cap B_r(x))\right)^{\frac{N}{N-1}}
    \leq (\gamma_2\beta)^{\frac{N}{N-1}}r^N\leq \eta r^N\,.
\]
Applying Theorem \ref{thm:elimination} and using Remark \ref{rem:perm}, we get that
$|V\cap B_{r/2}(x)|=0$ for every
\[
V:=\bigcup_{j\in \{1,\dots,k\}\setminus\{\bar{j}\}} \o_{\sigma(j)}\,,
\]
where $\sigma:\{1,\dots,k\}\setminus\{\bar{j}\}\to\{1,\dots,k\}\setminus\{\bar{j}\}$ is a permutation. Thus, we obtain that
\[
|B_{r/2}(x)\cap \o_j|=0
\]
for all $j\in\{1,\dots,k\}\setminus\{\bar{j}\}$.
Arguing as we did in Step 1, we obtain that $B_{r/2}(x)\subset \o_{\bar{j}}$.
Thus, $x\not\in\overline{J_u}\cap\o$. This contradicts our initial assumption.
In particular, we conclude $(iv)$.
\end{proof}



We are now in position to prove the main result of this paper.

\begin{proof}[Proof of Theorem \ref{thm:kcolors}]
Notice that the minimization problem \eqref{eq:kcolors} can be written as
\[
\inf_{a_1,\dots,a_k\in\R^M}\,\inf_{u\in BV(\o;\{a_1,\dots,a_k\})}\f(u)\,.
\]
Let $\{(a_1^n,\dots,a_k^n)\}_{n\in\N}$ and $\{u^n\}_{n\in\N}$ be minimizing sequences for the minimization problem \eqref{eq:kcolors}, \emph{i.e.}, $u^n\in BV(\o;\{a_1^n,\dots,a_k^n\})$ and
\[
\lim_{n\to\infty}\f(u^n)=\inf_{a_1,\dots,a_k\in\R^M}\,\inf_{u\in BV(\o;\{a_1,\dots,a_k\})}\f(u)\,.
\]
Without loss of generality, we can assume that
\begin{equation}\label{eq:energybound}
\lim_{n\to\infty}\f(u^n)<\infty\,.\\
\end{equation}

\emph{Step 1.} We claim that there exists $\bar{j}\in\{1,\dots,k\}$ such that
\[
\sup_{n\in\N}|a_{\bar{j}}^n|<\infty\,.
\]
Indeed, write
\[
u^n=\sum_{i=1}^k a^n_i\chi_{\o^n_i}\,.
\]
Since for every $n\in\N$ it holds that $|\o\setminus\cup_{i=1}^k \o^n_i|=0$, it is possible to find $\bar{j}\in\{1,\dots,k\}$ and $\delta>0$ such that (up to a not relabeled subsequence),
\begin{equation}\label{eq:boundbelow}
|\o^n_{\bar{j}}\setminus D|\geq\delta\,,
\end{equation}
for all $n\in\N$. We have
\begin{align*}
|a^n_{\bar{j}}|&\leq \frac{1}{|\o^n_{\bar{j}}\setminus D|^{\frac{1}{p}}}
   \left[\,\| a^n_{\bar{j}}-f \|_{L^p(\o^n_{\bar{j}}\setminus D;\R^M)} + \|f \|_{L^p(\o^n_{\bar{j}}\setminus D\,;\,\R^M)} \,\right]\\
&\leq \frac{1}{\delta^{\frac{1}{p}}} \left[\, \frac{1}{\lambda}\f(u^n) + \|f\|_{L^p(\o\,;\,\R^M)}   \,\right]\,.
\end{align*}
We conclude using \eqref{eq:energybound} and the fact that $f\in L^p(\o\,;\,\R^M)$.\\

\emph{Step 2.} We claim that, up to a (not relabeled) subsequence, for every $i\in\{1,\dots,k\}$ the following holds:
either $\{a^n_i\}_{n\in\N}$ is bounded or $|\o^n_i|\to0$ as $n\to\infty$.

Indeed, consider the sequence of sets $\{\o^n_{\bar{j}}\}_{n\in\N}$, where $\bar{j}\in\{1,\dots,k\}$ is an index found in Step 1. We have two cases: either there exists a (not relabeled) subsequence for which
\begin{equation}\label{eq:boundbelowper}
\hno(\rb\o^n_{\bar{j}}\cap\o)\geq\widetilde{\delta}\,,
\end{equation}
for every $n\in\N$ and for some $\widetilde{\delta}>0$, or $\hno(\rb\o^n_{\bar{j}}\cap\o)\to0$ as $n\to\infty$.
In the latter case, from \eqref{eq:boundbelow} and the isoperimetric inequality in $\o$ (see \cite[Remark 12.38]{Maggi}), we get that $\chi_{\o^n_{\bar{j}}}\to\chi_{\o}$ in $L^1(\o)$, and the claim is proved.
Assume that \eqref{eq:boundbelowper} holds. 
Since
\[
\hno(\rb\o^n_{\bar{j}}\cap\o)=\sum_{i\neq \bar{j}} \hno(\rb\o^n_{\bar{j}}\cap\rb\o^n_i\cap \o)\,,
\]
we can find, up to a (not relabeled) subsequence, an index $i\in\{1,\dots,k\}\setminus\{\bar{j}\}$ such that
\[
\inf_{n\in\N}\hno(\rb\o^n_{\bar{j}}\cap\rb\o^n_i\cap \o)>0\,.
\]
Using \eqref{eq:energybound} and the fact that $\{a^n_{\bar{j}}\}_{n\in\N}$ is bounded (see Step 1), we deduce that also $\{a^n_i\}_{n\in\N}$ is bounded.

We then proceed by induction as follows: assume that we found indexes $j_1,\dots,j_s\in\{1,\dots,k\}$, for some $s\in\{1,\dots,k\}$, such that, for all $i=1,\dots,s$, $\{a^n_{j_i}\}_{n\in\N}$ is bounded.
Consider the sequence of sets $\{V_n\}_{n\in\N}$, where
\[
V_n:=\bigcup_{i=1}^s \o^n_{j_i}\,.
\]
Then, either there exists a (not relabeled) subsequence for which
\[
\hno(\rb V_n\cap\o)\geq\widetilde{\delta}
\]
for every $n\in\N$ and for some $\widetilde{\delta}>0$, or $\hno(\rb V_n\cap\o)\to0$ as $n\to\infty$.
Reasoning as above, in the former case we get that $|\o^n_i|\to0$ as $n\to\infty$ for all $i\in\{1,\dots,k\}\setminus\{j_1,\dots,j_s\}$, while in the latter case we find an index $i\in\{1,\dots,k\}\setminus\{j_1,\dots,j_s\}$ such that $\{a^n_i\}_{n\in\N}$ is bounded.
Since $k\in\N$ is finite, this proves the claim.\\

\emph{Step 3.} We now conclude as follows. Denote by $I\subset\{1,\dots,k\}$ the set of indexes $i\in \{1,\dots,k\}$ for which the sequence $\{a^n_i\}_{n\in\N}$ is bounded.
Using a diagonalizing argument, and up to a subsequence (not relabeled), we have that
\begin{equation}\label{eq:convain}
a^n_i\to a_i\,,
\end{equation}
as $n\to\infty$, for all $i\in I$. Set $a_i:=0$ for all $i\in\{1,\dots,k\}\setminus I$.

\emph{Case 1.} Assume that $a_i\neq a_j$ if $i, j\in I$ with $i\neq j$.
In this case, we have that $\inf_{n\in\N}|a^n_i-a^n_j|>0$, for all $i, j\in I$ with $i\neq j$.
Using \eqref{eq:energybound} we obtain
\[
\sup_{n\in\N}\hno\left(\, \rb\o^n_i\cap \o \,\right)<\infty\,.
\]
Hence, for all $i\in I$, due to the compactness for sets of finite perimeter (see \cite[Theorem 3.23]{AFP}) up to a subsequence (not relabeled),
\[
\chi_{\o^n_i}\to\chi_{\o_i}\quad\quad \text{ in } L^1(\o)
\]
for some set of finite perimeter $\o_i\subset\o$.
On the other hand, we know that $|\o^n_i|\to0$ as $n\to\infty$ for all $i\in\{1,\dots,k\}\setminus I$.
Define the function $u\in BV(\o\,;\,\{a_1,\dots,a_k\})$ as
\[
u:=\sum_{i=1}^k a_i\chi_{\o_i}\,,
\]
where we set $\o_i:=\emptyset$ for all $i\in \{1,\dots,k\}\setminus I$. We claim that $u$ is a solution to the 
minimization problem \eqref{eq:kcolors}.
Indeed, setting $\alpha^n_{ij}:=|a_i^n-a_j^n|$ for $i,j\in I$ and $n\in\N$, we get
\begin{align*}
\liminf_{n\to\infty}\f(u^n)&\geq \liminf_{n\to\infty} \,\Biggl[\, \sum_{i<j \in I} \alpha^n_{ij}\hno(\rb \o^n_i\cap\rb \o^n_j\cap\o)\\
&\hspace{2cm}+ \lambda\sum_{i\in I} \int_{\o^n_i\,\meno\, D} |a^n_i-f|^p\dx  \\
&\hspace{3cm}+ \mu\sum_{i\in I} \int_{\o^n_i\cap D} |L(a^n_i\cdot e)-L(f\cdot e)|^p\dx \,\Biggr] \\
&\geq\sum_{i<j\in I} \alpha_{ij}\hno(\rb \o_i\cap\rb \o_j\cap\o)
    + \lambda\sum_{i\in I} \int_{\o_i\,\meno\, D} |a_i-f|^p\dx \\
&\hspace{2cm}+ \mu\sum_{i\in I} \int_{\o_i\cap D} |L(a_i\cdot e)-L(f\cdot e)|^p\dx\\
&=\f(u)\,,
\end{align*}
where in the last inequality we used the lower semicontinuity result by Ambrosio and Braides (see \cite[Theorem 2.1]{AmbBra2}), together with the facts that
\[
\int_{\o^n_i\,\meno\, D} |a^n_i-f|^p\dx\to \int_{\o_i\,\meno\, D} |a_i-f|^p\dx\,,
\]
and
\[
\int_{\o^n_i\cap D} |L(a^n_i\cdot e)-L(f\cdot e)|^p\dx\to \int_{\o_i\cap D} |L(a_i\cdot e)-L(f\cdot e)|^p\dx\,,
\]
as $n\to\infty$, for every $i\in I$. This proves the existence of a solution to the 
minimization problem \eqref{eq:kcolors} in this case.\\

\emph{Case 2.} Assume that $a_i=a_j$ for some $i, j\in I$ with $i\neq j$. We reason as follows.
Without loss of generality, we can suppose that there exist $s\in\{1,\dots,k\}$ and $b_1,\dots,b_s\in\{1,\dots,k\}$ with 
\[
1=b_1<b_2<b_3<\dots<b_s=k
\]
such that $a_i=a_j$ for all $i\in\{ b_j,\dots,b_{j-1}-1 \}$.
Consider the sequence $\{(V^n_1,\dots,V^n_k)\}_{n\in\N}$ of $k$-finite Caccioppoli partitions defined as
\[
V^n_i:=\o^n_{b_i}\cup\o^n_{b_i+1}\dots\cup \o^n_{b_{i+1}-1}\,,
\]
if $i\in\{1,\dots,s\}$, and $V^n_i:=\emptyset$ for $i\in\{s,\dots,k\}$,
and the sequence of  functions $\{v^n\}_{n\in\N}$ given by
\[
v^n:=\sum_{i=1}^k b^n_i \chi_{V^n_i}\,,
\]
where $b^n_i:=a^n_i$, for all $n\in\N$ and all $i=1,\dots,s$, while we set $b^n_i:=0$ for all $i=s,\dots,k$.
Applying the reasoning of Case 1 to the sequences $\{b^n_1,\dots, b^n_k\}$ and $\{v^n\}_{n\in\N}$, we get the existence of a solution of the minimization problem \eqref{eq:kcolors}.\\

\emph{Step 4.} Let $u\in BV(\o;\{a_1,\dots,a_k\})$ be a solution to the minimization problem \eqref{eq:kcolors}.
In particular, $u$ is a solution to the minimization problem
\[
\min_{v\in BV(\o;\{a_1,\dots,a_k\})} \f(v)\,.
\]
Thus, under the additional hypotheses $(H1), (H2)$ and $(H3)$, Theorem \ref{thm:kfixedcolors} yields the regularity result.
\end{proof}

\begin{remark}
We remark that, as it is well known in the literature, if in the minimization problem \eqref{eq:kcolors} we allow countably many colors, then, in general, the problem does not admit a solution.
Indeed, let $f\in L^\infty(\o;\R^M)$ be an initial datum for which any solution of the minimization problem \eqref{eq:minpb} is not piecewise constant. Let $u$ denote one of these solutions.
If there was a solution $v$ to the minimization problem \eqref{eq:kcolors} when countably many colors are allowed, we would have $\f(u)<\f(v)$. Since the class of piecewise constant functions is dense in the set of bounded functions with bounded variation with respect to the $L^p$ topology, for any $p\geq1$, it is possible to find a sequence $\{u_n\}_{n\in\N}$ of bounded piecewise constant functions converging in $L^1$ to $u$, with $|Du_n|(\o)\to|Du|(\o)$ as $n\to\infty$. In particular, $\f(u_n)\to\f(u)$. Thus, for $n$ large we would have $\f(u_n)<\f(v)$, contradicting the minimality of $v$.
\end{remark}

\section*{Acknowledgement}
The authors thank the Center for Nonlinear Analysis at Carnegie Mellon University for its support
during the preparation of the manuscript.
The research of both authors was funded by National Science Foundation under Grant No. DMS-1411646.


\bibliographystyle{siam}
\bibliography{Biblio}


\end{document}